\newcommand{\naturals}{\mathbb{N}}
\newcommand{\reals}{\mathbb{R}}
\newcommand{\indicator}{\mathbf{1}}
\newcommand{\pr}{\mathbb{P}}
\newcommand{\ex}{\mathbb{E}}
\newcommand{\D}{\mathscr{K}}
\newcommand{\Lim}{\widehat{X}^{(1)}}
\newcommand{\Mat}{\mathcal{M}}
\newcommand{\SMat}{\mathcal{P}}
\newcommand{\Simp}{\mathcal{S}}
\newcommand{\ed}{\stackrel{d}{=}}
\newcommand{\aas}{\stackrel{a.s.}{\longrightarrow}}
\newcommand{\dt}{\bullet}
\newcommand{\ns}{\mspace{0.0mu}}
\newcommand{\law}{\mathcal{L}}
\newcommand{\I}{[\mathbf{I}]}
\newcommand{\II}{[\mathbf{II}]}
\newcommand{\III}{[\mathbf{III}]}
\newcommand{\Aa}{[\mathbf{A1}]}
\newcommand{\Ab}{[\mathbf{A2}]}
\newcommand{\Ac}{[\mathbf{A3}]}
\newcommand{\ca}{[\mathbf{C1}]}
\newcommand{\cb}{[\mathbf{C2}]}
\newcommand{\vect}[1]{\boldsymbol{#1}}
\providecommand{\abs}[1]{\lvert#1\rvert}
\providecommand{\norm}[1]{\lVert#1\rVert}
\newcounter{lemmacount} \setcounter{lemmacount}{0}
\newcounter{corcount} \setcounter{corcount}{0}
\newcounter{propcount} \setcounter{propcount}{0}
\newcounter{examples} \setcounter{examples}{1}
\newcounter{remarks} \setcounter{remarks}{1}
\newtheorem{theorem}{Theorem}%[section]
\newtheorem{lemma}[lemmacount]{Lemma}
\newtheorem{proposition}[propcount]{Proposition}
\newtheorem{corollary}[corcount]{Corollary}
\newenvironment{example}[1][]{\medskip\noindent {\textbf{Example
\arabic{examples}.$\ns$} \stepcounter{examples}}}
\newenvironment{remark}[1][]{\medskip\noindent {\textbf{Remark
\arabic{remarks}.$\,$} \stepcounter{remarks}}}
\begin{document}

\title{A characterisation of transient random walks on stochastic matrices with Dirichlet distributed limits}

\author{S.\ McKinlay
\footnote{
Department of Mathematics and Statistics, University of Melbourne, Parkville 3010, Australia. E-mail: shaunam@ms.unimelb.edu.au.
}}

\date{}

\maketitle

\begin{abstract}
\noindent
We characterise the class of distributions of random stochastic matrices $X$ with the property that the products $X(n)X(n-1) \cdots X(1)$ of i.i.d.\ copies $X(k)$ of $X$ converge a.s.\ as $n \rightarrow \infty$ and the limit is Dirichlet distributed. This extends a result by Chamayou and Letac (1994) and is illustrated by several examples that are of interest in applications.\vspace{0.2cm}

\emph{Key words and phrases:} products of random matrices, Dirichlet distribution, limit distributions, Markov chains, random exchange models, random nested simplices, service networks with polling.

\emph{AMS Subject Classifications:}  primary 60J05; secondary 60B20, 60F99, 60J20.
\end{abstract}

% Introduction -----------------------------------------------------------------------------------------------------------------------------

\section{Introduction}\label{S_Intro}

Let $X$ be a random $d \times d$ matrix with non-negative entries, and $\{X(n)\}_{n \ge 1}$ be an i.i.d.\ sequence of random matrices with the same distribution as $X$. In this paper, we study the limit of the left products
\begin{equation}\label{e_prod}
X(n,1) := X(n) X(n-1) \cdots X(1)
\end{equation}
as $n \rightarrow \infty$ for a certain class of random matrices $X$. Clearly, the distribution of \eqref{e_prod} is equal to that of right product
\begin{equation}\label{e_rprod}
X(1,n) := X(1) X(2) \cdots X(n),
\end{equation}
and therefore any assertions concerning the distribution of \eqref{e_prod} as $n \rightarrow \infty$ apply to that of \eqref{e_rprod} as well.

The asymptotic behaviour of the left products \eqref{e_prod} was apparently first studied by Bellman \cite{Bellman}, who showed, under certain conditions, that $\lim_{n \rightarrow \infty} n^{-1} \ex \log X(n,1)_{i,j}$ exists, where $X(n,1)_{i,j}$ is the entry in the $i$th row and $j$th column of $X(n,1)$. This result was later strengthened to almost sure convergence in \cite{FurstenbergKesten}, where the behaviour of the norms $\norm{X(n,1)} := \max_{i \le d} \sum_{j=1}^d \abs{X(n,1)_{i,j}}$ was studied.

Several equivalent conditions for convergence in distribution of the right products \eqref{e_rprod} without normalisation were given in \cite{KestenSpitzer}. See also \cite{Bougerol} and \cite{Mukherjea} for surveys on the properties of the products \eqref{e_prod} and \eqref{e_rprod}.

We consider the case when the random matrix $X$ is stochastic, i.e., where all the row sums of $X$ equal one. The study of the products \eqref{e_prod} in this case was apparently initiated by Rosenblatt in \cite{Rosenblatt}, who applied existing results for compact semigroups. Recent results on the infinite product of deterministic and random stochastic matrices can be found in \cite{Touri}.

The class of stochastic matrices is a semigroup under matrix multiplication, and therefore the products \eqref{e_prod} and \eqref{e_rprod} generate a left and a right random walk on stochastic matrices by
\begin{equation}\label{lrw}
n \mapsto \; X(n,1)
\end{equation}
and
\begin{equation}\label{rrw}
n \mapsto \:X(1,n),
\end{equation}
respectively.

The right random walk \eqref{rrw} is central to several related problems involving distributed averaging. These include distributed computation, distributed optimization, distributed estimation, and distributed coordination (see \cite{Touri} and references therein). They are also related to certain Markov processes called Potlatch models. These models were introduced in \cite{HolleyLiggett} and \cite{LiggettSpitzer}, and are described in their simplest form in \cite{KestenSpitzer}. We will discuss these models further in Section \ref{SS_Potlatch}. A comprehensive reference for random walks on stochastic matrices and more general semigroups is \cite{Hognas}, where the transient random walk generated by random stochastic matrices satisfying conditions $\I$--$\III$ below is presented in Appendix B.

In \cite{ChamLetac}, Chamayou and Letac study the left products \eqref{e_prod} for random stochastic matrices $X$ satisfying the following conditions:

\begin{itemize}
\item[{$\I$}] The rows of $X$ are independent.
\item[{$\II$}] The rows of $X$ are Dirichlet distributed.
\item[{$\III$}] Letting $(\alpha_{i,1}, \ldots, \alpha_{i,d})$ be the Dirichlet parameters of the $i$th row of $X$, we have $\sum_{j=1}^d\alpha_{i,j} = \sum_{j=1}^d\alpha_{j,i}$ for $i=1, \ldots, d$.
\end{itemize}
It is shown in \cite{ChamLetac} that the above conditions are sufficient to ensure that:

\begin{itemize}
\item[{$\Aa$}] The products $X(n,1)$ converge a.s.\ to some random matrix $\widehat{X}$ as $n \rightarrow \infty$.
\item[{$\Ab$}] The limit $\widehat{X}$ has  identical rows a.s.
\item[{$\Ac$}] The rows of $\widehat{X}$ are Dirichlet distributed.
\end{itemize}
Unfortunately, conditions $\I$--$\III$ are quite restrictive. Surprisingly, it turns out assertions $\Aa$--$\Ac$ remain true under much broader conditions, and that none of the conditions $\I$--$\III$ is necessary. 

Denote by $\D_d$ the class of all distributions of a random $d \times d$ stochastic matrix $X$ such that $\Aa$--$\Ac$ hold. In this note, we extend the result in \cite{ChamLetac} by providing a charaterisation theorem (Theorem \ref{ext2}) for the class $\D_d$. We find that $\D_d$ is much broader than the class of distributions satisfying $\I$--$\III$. In addition, once the Dirichlet parameters of the limiting distribution are known, it is elementary to verify that a given distribution belongs to the class $\D_d$.

Chamayou and Letac's result above relies on a theorem (which we present as Theorem \ref{T1} below) that admits a rather elegant proof suggested by S. Lauritzen and also presented in \cite{ChamLetac}. The authors of \cite{ChamLetac} pointed out, using Erd\"os's remark on the existence of God's book for the best proofs, that S. Lauritzen's proof could be taken from that book. The proof of our Theorem \ref{ext1} which contains Theorem \ref{T1} as a special case, is also rather simple and concise. Similar to S. Lauritzen's proof, it is based on an insightful observation (Lemma \ref{t_indep} below) that, in our case, extends the well-known result by Pitman on scale independent functions of Gamma distributed random variables (see \cite{Pitman}).

The Dirichlet distribution appearing in the limit $\Ac$ is widely used in statistical applications. These include the modelling of compositional data (see e.g.\ \cite{Aitchison}), Bayesian analysis, statistical genetics, and nonparametric inference (see \cite{Ng} for a recent survey on the Dirichlet distribution and its applications).

The paper is structured as follows. Section \ref{S_Results} contains notation and the main results, while their proofs are presented in Section \ref{S_Proofs}. Examples and applications to random exchange models, random nested simplices, and a service network with polling are presented in Section \ref{S_App}.

% Notation and Main Results --------------------------------------------------------------------------------------------------------------

\section{Notation and Main Results}\label{S_Results}

For positive integers $r$ and $c$, denote by $\Mat_{r,c}$ the set of $r \times c$ matrices $(p_{i,j})_{i=1}^{r} \ns_{j=1}^{c}$ such that all $p_{i,j} \ge 0$, and by $\SMat_{r,c} \subset \Mat_{r,c}$ its subclass of matrices with $\sum_{j=1}^c p_{i,j} = 1$, $i = 1, \ldots, r$. Let  $\Mat^+_{r,c}$ and $\SMat^+_{r,c}$ be the subclass of all positive matrices from $\Mat_{r,c}$ and $\SMat_{r,c}$, respectively (by a positive matrix/vector we mean a matrix/vector with all positive entries), and set $\reals_+ := (0,\infty)$. Clearly, $\SMat_{d} := \SMat_{d,d}$ consists of all $d \times d$ stochastic matrices, with $\SMat^+_{d} := \SMat^+_{d,d}$ its subclass of positive stochastic matrices (we similarly define $\Mat_{d} := \Mat_{d,d}$ and $\Mat^+_{d} := \Mat^+_{d,d}$). For a matrix $(Z_{i,j}) \in \Mat_{r,c}$, we denote its row and column sums by $Z_{i \dt} := \sum_{j=1}^c Z_{i,j}$, $i=1, \ldots, r$, and $Z_{\dt j} := \sum_{i=1}^r Z_{i,j}$, $j=1, \ldots, c$, respectively. Similarly, for a vector $(Y_1, \ldots, Y_c)$, we denote the sum of its components by $Y_{\dt} := \sum_{i=1}^c Y_i$.

For a random element $X$, we denote its distribution by $\law(X)$, and write $X \sim F$ if $F = \law(X)$. If $Y \sim F_Y$ is independent of $X \sim F_X$, we write $F_X \otimes F_Y$ for the law of $(X,Y)$.

We denote by $\Gamma_u$ the Gamma distribution with scale parameter $1$ and shape parameter $u>0$, with density
\begin{equation*}
\frac{x^{u-1} e^{-x}}{\Gamma(u)} \indicator_{(0,\infty)}(x).
\end{equation*}
For a positive vector $\vect{a} = (a_1, \ldots, a_d)$, we denote by $D_{\vect{a}}$ the \emph{Dirichlet distribution} on the simplex $\Simp_d := \SMat_{1,d}$ with density
\begin{equation*}
\Gamma(a_{\dt}) \prod_{i=1}^d \frac{x_i^{a_i-1}}{\Gamma(a_i)}, \quad (x_1, \ldots, x_d) \in \Simp_d,
\end{equation*}
with respect to the ($d-1$)-dimensional volume measure on $\Simp_d$. We denote by $G_{\vect{a}}$ the distribution $\Gamma_{a_1} \otimes \cdots \otimes \Gamma_{a_d}$ on $\Mat_{1,d}$.

For $A = (\alpha_{i,j}) \in \Mat^+_{r,c}$, the \emph{Dirichlet distribution} $D_A$ on $\SMat_{r,c}$ is the law of the matrix $X = (X_{i,j}) \in \SMat_{r,c}$, such that $X^{(i)} :=(X_{i,1}, \ldots, X_{i,c}) \sim D_{(\alpha_{i,1}, \ldots, \alpha_{i,c})}$ and $X^{(1)}, \ldots, X^{(r)}$ are independent. Similarly, by $G_A$ we denote the law of the matrix $Z = (Z_{i,j}) \in \Mat_{r,c}$, such that $Z^{(i)} :=(Z_{i,1}, \ldots, Z_{i,c}) \sim G_{(\alpha_{i,1}, \ldots, \alpha_{i,c})}$ and $Z^{(1)}, \ldots, Z^{(r)}$ are independent.

Let $A = (\alpha_{i,j}) \in \Mat^+_{r,c}$. The following theorems are the main results in \cite{ChamLetac}. They are extensions of earlier theorems by Van Assche \cite{VanAssche}, who proved them in the case $c=r=2$ and all $\alpha_{i,j} = p>0$ (see also \cite{Volodin} for an extension to finite dimensions of the result in \cite{VanAssche})

\begin{theorem}\label{T1}
\emph{(\cite{ChamLetac})} If $(\vect{Y},X) \sim D_{(\alpha_{1 \dt}, \ldots, \alpha_{r \dt})} \otimes D_A$, then $\vect{Y}X \sim D_{(\alpha_{\dt 1}, \ldots, \alpha_{\dt c})}$. 
\end{theorem}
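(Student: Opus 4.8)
The plan is to realise the pair $(\vect{Y},X)$ through the familiar Gamma representation of the Dirichlet distribution, in which both the hypotheses and the conclusion become transparent. The one fact I will use repeatedly is this: if $G_1, \ldots, G_d$ are independent with $G_i \sim \Gamma_{a_i}$, then the normalised vector $(G_1/G_\dt, \ldots, G_d/G_\dt)$ is $D_{(a_1,\ldots,a_d)}$ and is independent of the total $G_\dt \sim \Gamma_{a_\dt}$.

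First I would introduce a single matrix $Z=(Z_{i,j}) \sim G_A$ of independent Gamma entries, $Z_{i,j}\sim\Gamma_{\alpha_{i,j}}$, and build both $X$ and $\vect{Y}$ out of it. Normalising each row gives $X_{i,j} := Z_{i,j}/Z_{i\dt}$, so that $X^{(i)} \sim D_{(\alpha_{i,1},\ldots,\alpha_{i,c})}$ and, since distinct rows use disjoint independent entries, $X \sim D_A$. The vector of row sums $(Z_{1\dt},\ldots,Z_{r\dt})$ has independent $\Gamma_{\alpha_{i\dt}}$ components, so its normalisation $\vect{Y} := (Z_{1\dt}/Z_{\dt\dt},\ldots,Z_{r\dt}/Z_{\dt\dt})$ is $D_{(\alpha_{1\dt},\ldots,\alpha_{r\dt})}$.

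The step needing care is the independence of $\vect{Y}$ and $X$, which ensures that the coupling just constructed carries the joint law prescribed in the statement. Within each row, the normalised vector $X^{(i)}$ is independent of the row total $Z_{i\dt}$ by the Gamma--Dirichlet fact above; and since the rows are themselves independent, the whole matrix $X$ is independent of the vector $(Z_{1\dt},\ldots,Z_{r\dt})$, hence of $\vect{Y}$, which is a deterministic function of that vector. Because the conclusion depends only on the joint distribution of $(\vect{Y},X)$, it suffices to verify it for this particular coupling.

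Finally I would compute the product and read off the answer. The $i$-th summand contributes $Y_i X_{i,j} = (Z_{i\dt}/Z_{\dt\dt})(Z_{i,j}/Z_{i\dt}) = Z_{i,j}/Z_{\dt\dt}$, so the row totals cancel and $(\vect{Y}X)_j = Z_{\dt j}/Z_{\dt\dt}$ is simply the $j$-th column sum of $Z$ over the grand total. The column sums $Z_{\dt 1},\ldots,Z_{\dt c}$ are sums over disjoint blocks of the independent entries, hence independent, with $Z_{\dt j}\sim\Gamma_{\alpha_{\dt j}}$; a last application of the Gamma--Dirichlet fact to this vector yields $\vect{Y}X \sim D_{(\alpha_{\dt 1},\ldots,\alpha_{\dt c})}$, as claimed. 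I expect the only genuinely delicate point to be the independence argument of the third paragraph; the rest is the cancellation of the row sums together with two routine applications of the standard Gamma construction.
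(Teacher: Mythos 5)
Your proof is correct, but it is not the route the paper takes: it is essentially S.~Lauritzen's ``God's book'' argument reproduced in \cite{ChamLetac}, which this paper mentions in its introduction but deliberately does not reuse. You build the whole pair $(\vect{Y},X)$ from a single matrix $Z \sim G_A$, taking $\vect{Y}$ to be the normalised vector of row sums; the independence of $\vect{Y}$ and $X$ then follows from the Gamma--Dirichlet independence applied row by row, and the product telescopes to $\bigl(Z_{\dt 1}/Z_{\dt\dt}, \ldots, Z_{\dt c}/Z_{\dt\dt}\bigr)$. Your flagged ``delicate point'' is indeed sound: the pairs $(X^{(i)}, Z_{i\dt})$ are independent across $i$ and each has within-pair independence, so $(X^{(1)},\ldots,X^{(r)})$ is independent of $(Z_{1\dt},\ldots,Z_{r\dt})$, hence of $\vect{Y}$. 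The paper argues differently: it takes a \emph{fresh} Gamma vector $\vect{V} \sim G_{(\alpha_{1\dt},\ldots,\alpha_{r\dt})}$ independent of $Z$ (not the row sums of $Z$), shows $\vect{V}X \sim G_{(\alpha_{\dt 1},\ldots,\alpha_{\dt c})}$ via the scaling identity \eqref{gamprop2}, and then invokes the equivalence \eqref{e_ext1} of Theorem \ref{ext1} --- whose proof rests on Lemma \ref{t_indep}, the randomised extension of Pitman's scale-independence result --- to convert that Gamma statement into the Dirichlet conclusion. The trade-off is clear: your coupling is shorter and completely self-contained, but it is tailored to $X \sim D_A$ with exactly matched parameters, since $\vect{Y}$ must literally be manufactured out of the row sums of the same matrix $Z$ that produces $X$. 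The paper's detour through Theorem \ref{ext1} buys generality: that equivalence holds for an \emph{arbitrary} random matrix $X$ independent of $\vect{Y}$, which is precisely the engine behind the characterisation of the class $\D_d$ in Theorem \ref{ext2}, with Theorem \ref{T1} falling out as a special case.
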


\begin{theorem}\label{T2}
\emph{(\cite{ChamLetac})} If $r=c=d$, $X \sim D_A$, and 
\begin{equation}\label{condA}
(\alpha_{1 \dt}, \ldots, \alpha_{d \dt}) = (\alpha_{\dt 1}, \ldots, \alpha_{\dt d}),
\end{equation}
then $\law(X) \in \D_d$, and $\Lim \sim D_{(\alpha_{1 \dt}, \ldots, \alpha_{d \dt})}$. Furthermore, if $\vect{Y}$ is a random vector taking values in $\Simp_d$ that is independent of $X$, then $\vect{Y}X \ed \vect{Y}$ iff $\vect{Y} \sim D_{(\alpha_{1 \dt}, \ldots, \alpha_{d \dt})}$.
\end{theorem}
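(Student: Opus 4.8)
The plan is to treat the left products as a Markov chain on the simplex $\Simp_d$ and to combine a contraction argument for almost sure convergence with the invariance property supplied by Theorem \ref{T1}. Since $X \sim D_A$ with $A \in \Mat^+_d$, each i.i.d.\ copy $X(k)$ is a.s.\ a positive stochastic matrix. Write $a_i := \alpha_{i\dt} = \alpha_{\dt i}$ for the common row/column sum guaranteed by \eqref{condA} and set $\vect{a} = (a_1, \ldots, a_d)$, so that $D_{(\alpha_{1\dt}, \ldots, \alpha_{d\dt})} = D_{(\alpha_{\dt 1}, \ldots, \alpha_{\dt d})} = D_{\vect{a}}$. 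With this notation Theorem \ref{T1} reads: if $\vect{Y} \sim D_{\vect{a}}$ is independent of $X \sim D_A$, then $\vect{Y}X \sim D_{\vect{a}}$; that is, $D_{\vect{a}}$ is invariant under right multiplication by $X$. I would first prove $\Aa$ and $\Ab$, then identify the limit law to obtain $\Ac$ and $\Lim \sim D_{\vect{a}}$, and finally deduce the characterisation of invariant $\vect{Y}$.

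For $\Aa$ and $\Ab$, denote by $\vect{y}_i(n) := \vect{e}_i X(n,1)$ the $i$th row of the product and by $C(n) := \mathrm{conv}\{\vect{y}_1(n), \ldots, \vect{y}_d(n)\} \subseteq \Simp_d$ its convex hull. The identity $\vect{y}_i(n+1) = \sum_j X(n+1)_{i,j}\,\vect{y}_j(n)$ exhibits each row of $X(n+1,1)$ as a convex combination of the rows of $X(n,1)$, so the hulls are nested, $C(n+1) \subseteq C(n)$. To control their size I would invoke Dobrushin's ergodic coefficient $\tau(P) := \max_{i,k} \tfrac12 \norm{\vect{e}_i P - \vect{e}_k P}$ (with $\norm{\cdot}$ the $\ell^1$ norm), which satisfies $\mathrm{diam}\, C(n) = 2\tau(X(n,1))$ and the submultiplicativity $\tau(X(n,1)) \le \prod_{m=1}^n \tau(X(m))$. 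Positivity of the $X(m)$ gives $\tau(X(m)) < 1$ a.s., and since the $\log \tau(X(m))$ are i.i.d.\ and a.s.\ negative, the strong law of large numbers gives $\sum_{m=1}^n \log\tau(X(m)) \to -\infty$, i.e.\ $\prod_{m=1}^n \tau(X(m)) \aas 0$. Hence $\mathrm{diam}\, C(n) \aas 0$, and the nested nonempty compact sets $C(n)$ shrink to a single random point $\Lim$; since every row lies in $C(n)$, all rows converge a.s.\ to $\Lim$, which yields both $\Aa$ and $\Ab$ with $\widehat{X}$ having identical rows equal to $\Lim$. This contraction estimate is where the real work lies; the remaining assertions follow cleanly from Theorem \ref{T1}.

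To identify the limit and obtain $\Ac$, I would let $\vect{Y} \sim D_{\vect{a}}$ be independent of $\{X(k)\}$. Applying Theorem \ref{T1} successively along $\vect{Y}X(1,n) = (\vect{Y}X(1,n-1))\,X(n)$ gives, by induction, $\vect{Y}X(1,n) \sim D_{\vect{a}}$ for every $n$, and since $X(1,n) \ed X(n,1)$ we also have $\vect{Y}X(n,1) \sim D_{\vect{a}}$. On the other hand, because $\widehat{X}$ has identical rows $\Lim$ and $\vect{Y}$ is a probability vector, $\vect{Y}X(n,1) \aas \vect{Y}\widehat{X} = \Lim$. A sequence with constant law $D_{\vect{a}}$ that converges a.s.\ must have $D_{\vect{a}}$ as its limiting law, so $\Lim \sim D_{\vect{a}}$; this establishes $\law(X) \in \D_d$ with $\Lim \sim D_{(\alpha_{1\dt}, \ldots, \alpha_{d\dt})}$.

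Finally, for the characterisation of $\vect{Y}$, the ``if'' direction is immediate: if $\vect{Y} \sim D_{\vect{a}}$ then Theorem \ref{T1} gives $\vect{Y}X \sim D_{\vect{a}} \ed \vect{Y}$. For ``only if'', suppose $\vect{Y}X \ed \vect{Y}$ with $\vect{Y}$ independent of $X$, and let $\{X(k)\}$ be i.i.d.\ copies independent of $\vect{Y}$. Then $\law(\vect{Y})$ is invariant for the one-step kernel $\vect{y} \mapsto \law(\vect{y}X)$, so iterating the hypothesis along $\vect{Y}X(1,n) = (\vect{Y}X(1,n-1))X(n)$ gives $\vect{Y}X(1,n) \ed \vect{Y}$ for all $n$. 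Arguing exactly as in the previous paragraph, $\vect{Y}X(1,n) \ed \vect{Y}X(n,1) \aas \Lim \sim D_{\vect{a}}$, whence $\vect{Y}X(1,n) \cd D_{\vect{a}}$; comparing with $\vect{Y}X(1,n) \ed \vect{Y}$ forces $\vect{Y} \sim D_{\vect{a}}$. The only genuine obstacle is the contraction estimate in the second paragraph; everything else is an application of the invariance in Theorem \ref{T1} together with the uniqueness of the common limit $\Lim$.
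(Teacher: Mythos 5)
Your proof is correct, but it follows a genuinely different route from the paper's. The paper deduces Theorem \ref{T2} from its general machinery: it uses Theorem \ref{T1} together with Theorem \ref{ext1} to convert the Dirichlet invariance $\vect{Y}X \ed \vect{Y}$ into the Gamma invariance $\vect{V}X \ed \vect{V}$, i.e.\ condition $\ca$ with $\vect{t} = (\alpha_{\dt 1}, \ldots, \alpha_{\dt d})$; it notes $\cb$ holds trivially with $m=1$ since $\pr(X \in \SMat^+_d)=1$; and it then invokes Theorem \ref{ext2}, whose proof in turn rests on Proposition \ref{prop} (an extension of Proposition 2.2 of Chamayou--Letac, cited rather than reproved) for the a.s.\ convergence and the uniqueness of the invariant law. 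You instead bypass the Gamma-representation machinery entirely: you prove $\Aa$--$\Ab$ from scratch via Dobrushin's ergodic coefficient (nested convex hulls of the rows, submultiplicativity of $\tau$, and a.s.\ positivity of Dirichlet matrices giving $\tau(X(m))<1$), and you then identify the limit and obtain the ``iff'' characterisation by iterating Theorem \ref{T1} along the right products and comparing with the a.s.\ limit of the left products. Your approach is more self-contained and elementary---it needs neither Lemma \ref{t_indep} nor the external Proposition 2.2---but it exploits positivity in an essential-looking way and, as stated, proves only Theorem \ref{T2}; the paper's detour through $\ca$, $\cb$ and Theorem \ref{ext2} is what delivers the stronger characterisation for matrices that may have many zero entries, with Theorem \ref{T2} falling out as a corollary. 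Two small points worth tightening: the a.s.\ divergence $\sum_m \log\tau(X(m)) \to -\infty$ is cleanest justified by monotonicity plus Borel--Cantelli (or the SLLN in its extended form allowing $\ex \log\tau(X(1)) = -\infty$), since integrability of $\log\tau(X(1))$ is not automatic; and in the iteration steps you should state explicitly that $\vect{Y}$ is taken independent of the whole sequence $\{X(k)\}_{k\ge1}$, so that $(\vect{Y}, X(1,n)) \ed (\vect{Y}, X(n,1))$ and the distributional identities transfer as claimed.
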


Theorem \ref{T2} is actually a simple consequence of Theorem \ref{T1}. Likewise, our extension of Theorem \ref{T2} (Theorem \ref{ext2} below) is based on the following result that, as we show at the end of this section, implies Theorem \ref{T1}. 

\begin{theorem}\label{ext1}
Let  $\vect{t} = (t_1, \ldots, t_r) \in \reals_+^r$ and $\vect{s} = (s_1, \ldots, s_c) \in \reals_+^c$ with $t_{\dt} = s_{\dt}$. Suppose $X$ is a random element of $\Mat_{r,c}$ independent of both $\vect{Y} \sim D_{\vect{t}}$ and $\vect{V} \sim G_{\vect{t}}$. Then
\begin{equation}\label{e_ext1}
\vect{Y}X \sim D_{\vect{s}} \;\; \text{ iff } \;\; \vect{V}X \sim G_{\vect{s}}. 
\end{equation}
\end{theorem}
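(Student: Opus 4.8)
The plan is to reduce both implications to the classical gamma representation of the Dirichlet distribution: a positive random vector $\vect{U} = (U_1,\dots,U_c)$ has law $G_{\vect{s}}$ if and only if its total mass $U_\dt$ and its normalisation $\vect{U}/U_\dt$ are independent with $U_\dt \sim \Gamma_{s_\dt}$ and $\vect{U}/U_\dt \sim D_{\vect{s}}$. (This is precisely the Pitman-type independence that the forthcoming Lemma \ref{t_indep} formalises.) The map $\vect{u} \mapsto (u_\dt, \vect{u}/u_\dt)$ is a bijection from $\reals_+^c$ onto $\reals_+ \times \Simp_c$ carrying $G_{\vect{s}}$ to the product $\Gamma_{s_\dt}\otimes D_{\vect{s}}$, so both directions of this equivalence are at my disposal, and the same statement holds with $\vect{t}$ in place of $\vect{s}$.

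Since $\vect{Y}X$ and $\vect{V}X$ depend only on the laws $D_{\vect{t}}\otimes\law(X)$ and $G_{\vect{t}}\otimes\law(X)$ respectively, I am free to choose a convenient coupling. First I would take $W \sim \Gamma_{t_\dt}$, $\vect{Y}\sim D_{\vect{t}}$ and $X$ mutually independent and set $\vect{V} := W\vect{Y}$. By the representation above $\vect{V}\sim G_{\vect{t}}$, and $\vect{V}$ is independent of $X$, so this realisation computes the correct distribution of $\vect{V}X$; moreover
\begin{equation*}
\vect{V}X \;=\; W\,(\vect{Y}X),
\end{equation*}
where $W$ is independent of the pair $(\vect{Y},X)$ and hence of $\vect{Y}X$. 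Because $t_\dt = s_\dt$ we have $W \sim \Gamma_{s_\dt}$. For the forward implication, if $\vect{Y}X \sim D_{\vect{s}}$ then $\vect{Y}X \in \Simp_c$ a.s., so $(\vect{V}X)_\dt = W$ and $\vect{V}X/(\vect{V}X)_\dt = \vect{Y}X$; thus the normalisation of $\vect{V}X$ is $D_{\vect{s}}$-distributed, its total mass is $\Gamma_{s_\dt}$-distributed, and the two are independent, whence $\vect{V}X\sim G_{\vect{s}}$ by the representation.

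For the reverse implication I would write $\vect{U}:=\vect{Y}X$ and assume $W\vect{U}\sim G_{\vect{s}}$. Applying the representation to $W\vect{U}$ yields $\vect{U}/U_\dt \sim D_{\vect{s}}$, that $(W\vect{U})_\dt = WU_\dt \sim \Gamma_{s_\dt}$, and that $WU_\dt$ is independent of $\vect{U}/U_\dt$. It then remains to show $U_\dt = 1$ a.s., after which $\vect{U}=\vect{U}/U_\dt\sim D_{\vect{s}}$ is immediate. Here $W \sim \Gamma_{t_\dt}$ is independent of $U_\dt$ (a function of $(\vect{Y},X)$), and $WU_\dt \sim \Gamma_{t_\dt}$ as well since $s_\dt = t_\dt$, so $W \ed WU_\dt$. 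I expect this last cancellation to be the main obstacle: two independent factors with $W \ed WU_\dt$ need not in general force the extra factor to be degenerate, and one must exploit the specific structure of the gamma law. The cleanest route is via the Mellin transform (equivalently, the characteristic function of logarithms): independence gives $\ex[(WU_\dt)^{i\theta}] = \ex[W^{i\theta}]\,\ex[U_\dt^{i\theta}]$, and since $\ex[W^{i\theta}] = \Gamma(t_\dt + i\theta)/\Gamma(t_\dt)$ never vanishes (the gamma function has no zeros), I can cancel it to obtain $\ex[U_\dt^{i\theta}] \equiv 1$, forcing $U_\dt = 1$ a.s. by uniqueness. This completes both directions.
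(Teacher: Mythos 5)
Your proof is correct, but it takes a genuinely different route from the paper's. The paper's argument rests on Lemma \ref{t_indep}, an extension of Pitman's scale-independence result to random functions (proved by a joint Laplace transform computation), whose Corollary \ref{c_indep} gives that $\bigl(\vect{V}/V_{\dt}\bigr)X$ is independent of $V_{\dt}$; the hard direction is then finished by equating the logarithms of the two vector representations $\vect{V}X \ed \bigl(\vect{V}/V_{\dt}\bigr)X\,\widetilde{V}_{\dt}$ and $\vect{Z} \ed \bigl(\vect{Z}/Z_{\dt}\bigr)\widetilde{Z}_{\dt}$ and cancelling the common non-vanishing factor $\Gamma(t_{\dt}+iu)/\Gamma(t_{\dt})$ in a \emph{multivariate} characteristic-function identity, while the converse is dismissed as ``the same steps in reverse order.'' You avoid the lemma entirely: by realising $\vect{V} := W\vect{Y}$ with $W \sim \Gamma_{t_{\dt}}$, $\vect{Y}\sim D_{\vect{t}}$, $X$ mutually independent, the Pitman-type independence of $V_{\dt}=W$ from $\bigl(\vect{V}/V_{\dt}\bigr)X = \vect{Y}X$ is built into the coupling, which is legitimate since both sides of \eqref{e_ext1} depend only on the product laws $D_{\vect{t}}\otimes\law(X)$ and $G_{\vect{t}}\otimes\law(X)$. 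Your Dirichlet-to-Gamma direction then needs no transform argument at all (it is exactly the structural step the paper leaves implicit), and your Gamma-to-Dirichlet direction localises the analytic content in a clean scalar statement: $W U_{\dt} \ed W$ with $W$ independent of $U_{\dt} = (\vect{Y}X)_{\dt}$ forces $U_{\dt}=1$ a.s.\ via the Mellin transform, using the same non-vanishing of the gamma function that the paper uses. What the paper's route buys is a standalone lemma of independent interest (the random-function extension of Pitman); what yours buys is a shorter, more elementary and fully symmetric treatment of both implications.

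One small remark: your stated worry that $W \ed WU_{\dt}$ with independence ``need not in general force the extra factor to be degenerate'' is actually unfounded. For any independent pair, the characteristic function of $\ln W$ is non-zero on a neighbourhood of the origin by continuity, so the characteristic function of $\ln U_{\dt}$ equals $1$ there, and a characteristic function equal to $1$ on a neighbourhood of $0$ forces degeneracy. Your gamma-specific cancellation is of course valid and keeps the argument self-contained, but the claim holds without any special structure of $W$.
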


To state the next theorem, we will need the following conditions on the random matrix $X \in \SMat_d$.\vspace{0.2cm}\\
$\ca$ There exists a $\vect{t} \in \reals_+^d$ such that, for a random vector $\vect{V} \sim G_{\vect{t}}$ independent of $X$, one has $\vect{V}X \ed \vect{V}$.\vspace{0.2cm}\\
$\cb$ For an i.i.d.\ sequence $\{X(n)\}_{n \ge 1}$ with $X(1) \ed X$, there exists an $m < \infty$ such that $\pr(X(m,1) \in \SMat^+_d) > 0$.

\begin{remark}
Note that, unlike the conditions of Theorem \ref{T2} (under which $\ca$ holds), condition $\ca$ does not mean that $X$ must be positive. In Section \ref{S_App}, we provide examples where $\ca$ is met, but most of the entries of $X$ are zeros.
\end{remark}

\begin{theorem}\label{ext2}
\begin{itemize}
\item[\emph{(i)}] Relation
\begin{equation}\label{e_converge}
\law(X) \in \D_d
\end{equation}
holds iff $\ca$ and $\cb$ are met for $X$.

\item[\emph{(ii)}] If \eqref{e_converge} holds, then $\Lim \sim D_{\vect{t}}$, where the vector $\vect{t}$ is the same as in  $\ca$, and if $\vect{Y}$ is a random element of $\Simp_d$ independent of $X$, then $\vect{Y}X \ed \vect{Y}$ iff $\vect{Y} \ed \Lim$.
\end{itemize}
\end{theorem}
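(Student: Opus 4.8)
The plan is to prove (i) as two implications and to read off (ii) along the way, using Theorem \ref{ext1} as the bridge between the Gamma formulation of $\ca$ and the Dirichlet stationarity that is natural for the limit. Concretely, taking $\vect{s}=\vect{t}$ in Theorem \ref{ext1} shows that, for $X$ independent of $\vect{Y}\sim D_{\vect{t}}$ and $\vect{V}\sim G_{\vect{t}}$, condition $\ca$ (i.e.\ $\vect{V}X\ed\vect{V}$) is equivalent to $\vect{Y}X\sim D_{\vect{t}}$, that is, to the statement that $D_{\vect{t}}$ is stationary for the simplex-valued chain $\vect{Y}\mapsto\vect{Y}X$. I would therefore work throughout with this stationarity property and translate back to $\ca$ only at the end via Theorem \ref{ext1}. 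The elementary identities used repeatedly are that $X\vect{1}=\vect{1}$ for stochastic $X$ and that a rank-one limit $\widehat{X}=\vect{1}\,\Lim$ satisfies $\vect{Y}\widehat{X}=\Lim$ for every probability vector $\vect{Y}$.

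For sufficiency, assume $\ca$ and $\cb$. First, $\ca$ with Theorem \ref{ext1} gives $\vect{Y}X\ed\vect{Y}$ for $\vect{Y}\sim D_{\vect{t}}$, and induction along the chain $\vect{Z}_k=\vect{Z}_{k-1}X(k)$ (using independence of $X(k)$ from $\vect{Z}_{k-1}$) yields $\vect{Y}X(1,n)\ed\vect{Y}$ for every $n$. Second, $\cb$ supplies a.s.\ convergence: choosing $\epsilon>0$ with $p:=\pr(\text{all entries of }X(m,1)\ge\epsilon)>0$, the i.i.d.\ blocks $X(km,(k-1)m+1)$ are positive infinitely often a.s.\ by Borel--Cantelli, so the Dobrushin coefficient of ergodicity of $X(n,1)$ is a product of factors that are always $\le 1$ and are $\le 1-d\epsilon$ infinitely often, hence tends to $0$. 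Since the convex hull of the rows of $X(n,1)$ is nested decreasing (each row of $X(n)X(n-1,1)$ is a convex combination of rows of $X(n-1,1)$) with diameter controlled by this coefficient, the rows converge a.s.\ to a common limit $\Lim$, giving $\Aa$ and $\Ab$. Finally, passing to the limit in $\vect{Y}X(n,1)\ed\vect{Y}$ (valid since $X(n,1)\ed X(1,n)$) together with $\vect{Y}X(n,1)\aas\vect{Y}\widehat{X}=\Lim$ identifies $\law(\Lim)=D_{\vect{t}}$, which is $\Ac$ and the first claim of (ii).

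For necessity, assume $\law(X)\in\D_d$, so $X(n,1)\aas\widehat{X}$ with identical Dirichlet rows $\Lim\sim D_{\vect{t}}$. Since Dirichlet vectors are a.s.\ positive and $\SMat^+_d$ is open, a.s.\ convergence forces $X(n,1)\in\SMat^+_d$ eventually, so $\pr(X(n,1)\in\SMat^+_d)\to 1$ and $\cb$ holds. To obtain $\ca$ it suffices, by Theorem \ref{ext1}, to show $D_{\vect{t}}$ is stationary, which I would get from the general principle that the limit law of a Markov chain is stationary: in $\vect{Z}_{n+1}=\vect{Z}_nX(n+1)$ one has $\vect{Z}_n\cd\Lim$ and $\vect{Z}_{n+1}\cd\Lim$ while $X(n+1)$ is independent of $\vect{Z}_n$, so by joint convergence and the continuous mapping theorem $\vect{Z}_{n+1}\cd\vect{W}X'$ with $\vect{W}\sim D_{\vect{t}}$ independent of $X'\ed X$; uniqueness of the limit gives $\vect{W}X'\ed\vect{W}$, i.e.\ stationarity, whence $\ca$.

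It remains to finish (ii). The implication $\vect{Y}\ed\Lim\Rightarrow\vect{Y}X\ed\vect{Y}$ is exactly the stationarity above. Conversely, if $\vect{Y}X\ed\vect{Y}$ then iterating gives $\vect{Y}X(1,n)\ed\vect{Y}$ for all $n$, while $\vect{Y}X(n,1)\aas\Lim$; since $X(1,n)\ed X(n,1)$, comparing laws yields $\vect{Y}\ed\Lim$, which in passing shows the stationary law, equivalently the vector $\vect{t}$, is unique. I expect the main obstacle to be the a.s.\ rank-one convergence in the sufficiency step, namely that $\cb$ alone (positivity of the product with positive probability at a single finite time) forces both $\Aa$ and $\Ab$; the delicate points are extracting the uniform lower bound $\epsilon$ from the open-set event $\{X(m,1)\in\SMat^+_d\}$ and verifying submultiplicativity of the ergodic coefficient across independent blocks. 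This contraction argument is the analytic heart and, if preferred, can be cited from Appendix~B of \cite{Hognas} rather than reproduced.
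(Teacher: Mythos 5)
Your proof is correct, and its skeleton matches the paper's: Theorem \ref{ext1} is the bridge between $\ca$ and Dirichlet stationarity, $\cb$ is read off from a.s.\ positivity of the Dirichlet limit, and the limit law is identified with the unique stationary law on $\Simp_d$. The genuine difference lies in how you handle the analytic core. The paper packages both the a.s.\ convergence $X(n,1) \aas \vect{e}_d^T\vect{W}$ and the characterisation ``$\vect{Y}X(1) \ed \vect{Y}$ iff $\vect{Y} \ed \vect{W}$'' into Proposition \ref{prop}, which it obtains essentially by citation: Proposition 2.2 of \cite{ChamLetac} supplies convergence along multiples of $m$ and the stationarity statement, and the paper only adds the interpolation to all $n$. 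You instead prove both ingredients from scratch: convergence via the second Borel--Cantelli lemma applied to the i.i.d.\ blocks $X(km,(k-1)m+1)$, submultiplicativity of the Dobrushin coefficient, and nested convex hulls of rows; and the stationary-law characterisation by iterating $\vect{Y}X(1,n)\ed\vect{Y}$, switching between left and right products in law, and comparing with $\vect{Y}X(n,1)\aas\Lim$. Your necessity argument for $\ca$ also differs in flavour: the paper uses the a.s.\ factorisation $\widehat{X} = \widehat{X}'X(1)$ with $\widehat{X}'$ an independent copy of $\widehat{X}$, while you use weak convergence of the chain plus the continuous mapping theorem; both are valid, the paper's being marginally more direct. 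What your route buys is self-containedness --- the theorem no longer rests on an external proposition whose proof the reader must consult --- at the cost of length; the delicate points you flag (extracting a uniform $\epsilon$ from $\pr(X(m,1)\in\SMat_d^+)>0$ via an increasing union of events, and the contraction factor $1-d\epsilon$ for entrywise-$\epsilon$ matrices) do go through exactly as you sketch.
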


We conclude this section by showing that the assertions of Theorems \ref{T1} and \ref{T2} do follow from those of our Theorems \ref{ext1} and \ref{ext2}. Let $\vect{\xi} = (\xi_1, \ldots, \xi_d) \sim G_{\vect{t}}$ for some $\vect{t} \in \reals_+^d$. Then it is well-known that (see for instance formula 2.1.2 in \cite{Ng})
\begin{equation}\label{e_DirGam}
\left( \frac{\xi_1}{\xi_{\dt}}, \ldots, \frac{\xi_d}{\xi_{\dt}} \right) \sim D_{\vect{t}}.
\end{equation}

A remarkable characteristic property of the gamma distribution is as follows (\cite{Lukacs}). Suppose that $\eta_1, \eta_2 > 0$ are independent non-degenerate random variables. Then $\eta_1$  and $\eta_2$ are gamma distributed with a common scale parameter iff $\eta_1+\eta_2$ is independent of $\eta_1/(\eta_1+\eta_2)$. It follows that 
\begin{equation}\label{gamprop}
\xi_{\dt} \quad \text{is independent of} \quad \left( \frac{\xi_1}{\xi_{\dt}}, \ldots, \frac{\xi_d}{\xi_{\dt}} \right),
\end{equation}
and so
\begin{equation}\label{gamprop2}
(\xi_1, \ldots, \xi_d) \ed \left(\frac{\xi_1}{\xi_{\dt}}, \ldots, \frac{\xi_d}{\xi_{\dt}} \right)\tilde{\xi_{\dt}},
\end{equation}
where $(\tilde{\xi}_1, \ldots, \tilde{\xi}_d)$ is an independent copy of $\vect{\xi}$.

We will now use \eqref{e_DirGam} and relation \eqref{gamprop2} to show that Theorem \ref{T1} follows from Theorem \ref{ext1}. Let $A = (\alpha_{i,j}) \in \Mat^+_{r,c}$ and $(\vect{V},Z) \sim G_{(\alpha_{1 \dt}, \ldots, \alpha_{r \dt})} \otimes G_A$. Then, we have from  \eqref{e_DirGam} that
\begin{equation}\label{ZrepX}
X :=
\begin{pmatrix}
\frac{Z_{1,1}}{Z_{1\dt}} & \dotsb & \frac{Z_{1,c}}{Z_{1\dt}}\\
\vdots & \ddots & \vdots\\
\frac{Z_{r,1}}{Z_{r \dt}} & \dotsb & \frac{Z_{r,c}}{Z_{r \dt}}\\
\end{pmatrix} \sim D_A,
\end{equation}
and
\begin{equation}\label{Ydist}
\vect{Y} := \left( \frac{V_1}{V_{\dt}}, \cdots, \frac{V_r}{V_{\dt}} \right) \sim D_{(\alpha_{1 \dt}, \ldots, \alpha_{r \dt})}.
\end{equation}
Therefore
\begin{equation}\label{VXdist}
(\vect{V},X) \sim G_{(\alpha_{1 \dt}, \ldots, \alpha_{r \dt})} \otimes D_A,
\end{equation}
and
\begin{equation}\label{YXdist}
(\vect{Y},X) \sim D_{(\alpha_{1 \dt}, \ldots, \alpha_{r \dt})} \otimes D_A.
\end{equation}
Now
\begin{equation}\label{e_dirmatrix}
\vect{V}X = 
\sum_{k=1}^r \left( \frac{Z_{k,1}}{Z_{k \dt}}, \ldots, \frac{Z_{k,c}}{Z_{k \dt}} \right) V_k,
\end{equation}
and relations \eqref{gamprop2} and \eqref{VXdist} imply that the random vector on the right hand side of \eqref{e_dirmatrix} is distributed as
\begin{equation}\label{e_dirmatrix2}
\sum_{k=1}^r (Z_{k,1}, \ldots, Z_{k,c}) = (Z_{\dt 1}, \ldots, Z_{\dt c}) \sim G_{(\alpha_{\dt 1}, \ldots, \alpha_{\dt c})}.
\end{equation}
It follows from Theorem \ref{ext1} that for $\vect{Y}$ satisfying \eqref{YXdist}, one has $\vect{Y}X \sim D_{(\alpha_{\dt 1}, \ldots, \alpha_{\dt c})}$, thus establishing the claim of Theorem \ref{T1}.

To see that Theorem \ref{T2} follows from Theorems \ref{ext1} and \ref{ext2}, we suppose that $\{X(n)\}_{n \ge 1}$ are i.i.d.\ with law $D_A$, and $A$ satisfies \eqref{condA}. By Theorem \ref{T1}, if $\vect{Y} \sim D_{(\alpha_{1\dt}, \ldots, \alpha_{d \dt})}$ is independent of $X(1)$, then $\vect{Y}X(1) \sim D_{(\alpha_{\dt 1}, \ldots, \alpha_{\dt d})}$, and Theorem \ref{ext1} implies that $\vect{V}X(1) \ed \vect{V}$ for $\vect{V} \sim G_{(\alpha_{\dt 1}, \ldots, \alpha_{\dt d})}$ independent of $X(1)$. Thus $\ca$ holds with $\vect{t} = (\alpha_{\dt 1}, \ldots, \alpha_{\dt d})$. Since $\pr(X(1) \in \SMat^+_d) = 1$, $\cb$ is satisfied as well, and the assertion of Theorem \ref{T2} follows by applying Theorem \ref{ext2}.

% Proofs -------------------------------------------------------------------------------------------------------------------------------------
 
\section{Proofs}\label{S_Proofs}

A remarkable observation made by Pitman \cite{Pitman} is the following extension of \eqref{gamprop}. Let, as above, $\vect{\xi} \sim G_{\vect{t}}$, and $f: \reals^d \rightarrow \reals$ be a \emph{scale independent} measurable function, i.e., for any $a \neq 0$,
\begin{equation*}
f(ax_1, \ldots, ax_d) \equiv f(x_1, \ldots, x_d).
\end{equation*}
Then the random variable $f(\xi_1, \ldots, \xi_d)$ is independent of $\xi_{\dt}$.

The proof of Theorem \ref{ext1} is based on the following extension of that observation to random functions.

\begin{lemma}\label{t_indep}
Let $(\Omega, \mathcal{F}, \pr)$ be a probability space, $(E, \mathcal{E})$ a measurable space, and $X: \Omega \rightarrow E$ a random element. Suppose $H: \reals^r \times E \rightarrow \reals_+$ is jointly measurable and, for any $a \neq 0$ and $\omega \in \Omega$,
\begin{equation*}
H(ay_1, \ldots, ay_r, X(\omega)) = H(y_1, \ldots, y_r, X(\omega))
\end{equation*}
for all $(y_1, \ldots, y_r) \in \reals^r$. If $\vect{V} = (V_1, \ldots, V_r) \sim G_{\vect{t}}$, $\vect{t} \in \reals_+^r$, is independent of $X$, then $V_{\dt}$ is independent of $H(\vect{V}, X)$.
\end{lemma}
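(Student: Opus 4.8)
The plan is to use the classical Gamma--Dirichlet decomposition to factor the total mass $V_{\dt}$ out of $H$ via its scale-independence, thereby reducing the claim to an elementary independence statement. Set $S := V_{\dt}$ and $\vect{W} := \vect{V}/S = (V_1/S, \ldots, V_r/S)$, so that $\vect{V} = S\vect{W}$. By \eqref{e_DirGam} and \eqref{gamprop}, $\vect{W} \sim D_{\vect{t}}$ and, crucially, $\vect{W}$ is independent of $S$.

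First I would remove $S$ from inside $H$. For each fixed $\omega$ the map $\vect{y} \mapsto H(\vect{y}, X(\omega))$ is scale-independent, so substituting the a.s.\ positive scalar $a = S(\omega)$ and $\vect{y} = \vect{W}(\omega)$ pointwise yields
\[
H(\vect{V}, X) = H(S\vect{W}, X) = H(\vect{W}, X) \quad \text{a.s.},
\]
and joint measurability of $H$ ensures $(\vect{w},\omega) \mapsto H(\vect{w}, X(\omega))$ is measurable, so $H(\vect{W},X)$ is a genuine measurable function of the pair $(\vect{W}, X)$. Next I would note that $(\vect{W}, S)$ is a measurable function of $\vect{V}$, so the hypothesis $X \perp \vect{V}$ gives $X \perp (\vect{W}, S)$; together with $\vect{W} \perp S$ this upgrades to full mutual independence, $\law(\vect{W}, S, X) = \law(\vect{W}) \otimes \law(S) \otimes \law(X)$, whence in particular the block $(\vect{W}, X)$ is independent of $S$.

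The conclusion then follows immediately: $H(\vect{W}, X)$ is a measurable function of $(\vect{W}, X)$ and hence independent of $S = V_{\dt}$, and since $H(\vect{V}, X) = H(\vect{W}, X)$ a.s., $H(\vect{V}, X)$ is independent of $V_{\dt}$. The lemma is short precisely because scale-independence collapses $\vect{V}$ to its direction $\vect{W}$; the only point requiring care is the mutual-independence step, where one must verify that $\vect{W} \perp S$ and $X \perp (\vect{W}, S)$ combine to give $(\vect{W}, X) \perp S$ rather than mere pairwise independence. An alternative I would keep in reserve avoids the explicit decomposition: condition on $X = x$, apply Pitman's deterministic scale-independence result to the fixed map $H(\cdot, x)$ to get $H(\vect{V}, x) \perp V_{\dt}$, and integrate over $\law(X)$ by Fubini; there the obstacle shifts to the measurability of $x \mapsto \ex[\phi(H(\vect{V}, x))\psi(V_{\dt})]$, which again rests on the joint measurability of $H$.
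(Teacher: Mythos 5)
Your proof is correct, but it takes a genuinely different route from the paper. The paper proves Lemma \ref{t_indep} by a direct computation: it writes the joint Laplace transform $\phi(s,u)$ of $V_{\dt}$ and $H(\vect{V},X)$ as an integral against the Gamma density (with $\ex\, e^{-uH(v_1,\ldots,v_r,X)}$ appearing inside via independence of $\vect{V}$ and $X$), substitutes $y_k=(1+s)v_k$, invokes scale independence of $H(\cdot,X(\omega))$, and reads off that $\phi(s,u)$ factors into a function of $s$ times a function of $u$; this is Pitman's original transform argument carried over verbatim to the random-function setting, and it is entirely self-contained. You instead reduce the lemma to the classical deterministic fact: writing $\vect{V}=S\vect{W}$ with $S=V_{\dt}$, $\vect{W}=\vect{V}/S$, you import $\vect{W}\perp S$ (recorded in the paper as \eqref{gamprop}, from Lukacs' characterisation, so there is no circularity in using it), collapse $H(\vect{V},X)=H(\vect{W},X)$ a.s.\ by scale independence (legitimate since $S>0$ a.s.), and then upgrade the two independence relations $X\perp(\vect{W},S)$ and $\vect{W}\perp S$ to the full factorisation $\law(\vect{W},S,X)=\law(\vect{W})\otimes\law(S)\otimes\law(X)$, whence $(\vect{W},X)\perp S$ and the conclusion follows. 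That upgrade step is exactly the right point to flag: pairwise independence alone would not suffice, but here the product-test-function computation $\ex[f(\vect{W})g(S)h(X)]=\ex[f(\vect{W})g(S)]\,\ex[h(X)]=\ex[f(\vect{W})]\,\ex[g(S)]\,\ex[h(X)]$ plus a monotone class argument does give the triple factorisation, so your argument is sound. What each approach buys: yours is computation-free and makes transparent that the random element $X$ adds nothing beyond bookkeeping once \eqref{gamprop} is known; the paper's is self-contained (it effectively reproves \eqref{gamprop} in passing rather than citing it) and stays within the single technique of transform factorisation. Your reserve argument (condition on $X=x$, apply Pitman to $H(\cdot,x)$, integrate) is also valid and is in spirit closest to how the paper's integral is organised.
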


\begin{proof}
Let $\phi(s,u)$, $(s,u) \in \reals_+^2$, denote the joint Laplace transform of $V_{\dt}$ and $H(\vect{V},X)$. Then
\begin{align*}
\phi(s,u) &= \ex e^{-s(V_1 + \cdots + V_r) - uH(V_1, \ldots, V_r,X)}\\
&= C_{\vect{t}} \int_0^{\infty} \cdots \int_0^{\infty} v_1^{t_1-1} \cdots v_r^{t_r-1} e^{-(1+s)\sum v_k} \ex e^{-uH(v_1, \ldots, v_r, X)} dv_1 \cdots dv_r,
\end{align*}
where $C_{\vect{t}}^{-1} = \prod_{k=1}^r \Gamma(t_k)$. Making the substitution $y_k = (1+s)v_k$ and observing that $H(v_1, \ldots, v_r, X) \ed H(y_1, \ldots, y_r, X)$ by virtue of scale independence, we have that
\begin{equation*}
\phi(s,u) = \frac{C_{\vect{t}}}{(1+s)^{\sum t_k}} \int_0^{\infty} \cdots \int_0^{\infty}  y_1^{t_1-1} \cdots y_r^{t_r-1} e^{-\sum y_k} \ex e^{-uH(y_1, \ldots, y_r, X)} dy_1 \cdots dy_r,
\end{equation*}
which is the product of two functions, one depending on $s$, and the other on $u$. Therefore $V_{\dt}$ and $H(\vect{V},X)$ are independent as claimed.
\end{proof}

The next result is an obvious consequence of Lemma \ref{t_indep}.

\begin{corollary}\label{c_indep}
Let $\vect{V} = (V_1, \ldots, V_r) \sim G_{\vect{t}}$, and $X = (X_{i,j}) \in \Mat_{r,c}$ be random elements independent of each other, $X$ having positive row sums a.s. Define the function $H(v_1, \ldots, v_r, X) = (H_1(v_1, \ldots, v_r, X), \ldots,$ $ H_c(v_1, \ldots, v_r, X))$ by
\begin{equation}\label{e_hdef}
H_j(v_1, \ldots, v_r, X) := \sum_{i=1}^r \frac{v_iX_{i,j}}{v_{\dt}}, \quad 1 \le j \le c.
\end{equation}
Then $H(\vect{V}, X)$ is independent of $V_{\dt}$.
\end{corollary}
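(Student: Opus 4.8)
The plan is to deduce Corollary \ref{c_indep} directly from Lemma \ref{t_indep} by verifying that the vector-valued map $H$ defined in \eqref{e_hdef} satisfies the scale-independence hypothesis componentwise, and then upgrading the componentwise conclusion to joint independence of the whole vector $H(\vect{V},X)$ from $V_{\dt}$. First I would fix a column index $j$ and observe that
\begin{equation*}
H_j(ay_1, \ldots, ay_r, X) = \sum_{i=1}^r \frac{(ay_i)X_{i,j}}{\sum_{k=1}^r ay_k} = \sum_{i=1}^r \frac{a\,y_iX_{i,j}}{a \sum_{k=1}^r y_k} = H_j(y_1, \ldots, y_r, X)
\end{equation*}
for every $a \neq 0$, so each coordinate of $H$ is scale independent as a function of $(y_1,\ldots,y_r)$ for each fixed value of $X(\omega)$. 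This is where the almost-sure positivity of the row sums of $X$ is used: it guarantees that $v_{\dt} > 0$ on the support of $\vect{V}$ and that the denominators $v_{\dt}$ appearing in \eqref{e_hdef} are a.s.\ nonzero, so $H$ is a well-defined nonnegative jointly measurable function on $\reals^r \times E$.

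The remaining issue is that Lemma \ref{t_indep} is stated for a \emph{scalar} function $H: \reals^r \times E \rightarrow \reals_+$, whereas here $H$ takes values in $\reals_+^c$, so I cannot invoke the lemma verbatim. The cleanest way I would resolve this is to form, for an arbitrary fixed vector $\vect{u} = (u_1, \ldots, u_c) \in \reals_+^c$, the scalar function
\begin{equation*}
\widetilde{H}(y_1, \ldots, y_r, X) := \sum_{j=1}^c u_j \, H_j(y_1, \ldots, y_r, X),
\end{equation*}
which is again nonnegative, jointly measurable, and scale independent, being a nonnegative linear combination of the scale-independent coordinates $H_j$. Applying Lemma \ref{t_indep} to $\widetilde{H}$ shows that $V_{\dt}$ is independent of $\sum_{j=1}^c u_j H_j(\vect{V},X) = \langle \vect{u}, H(\vect{V},X)\rangle$ for every choice of $\vect{u} \in \reals_+^c$.

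Finally I would convert this family of scalar independence statements into the joint claim. Since $\vect{u}$ ranges over all of $\reals_+^c$, the independence of $V_{\dt}$ from the scalar $\langle \vect{u}, H(\vect{V},X)\rangle$ yields that the joint Laplace transform of $V_{\dt}$ and $H(\vect{V},X)$ factorises: for $s \ge 0$ and $\vect{u} \in \reals_+^c$,
\begin{equation*}
\ex\, e^{-sV_{\dt} - \langle \vect{u},\, H(\vect{V},X)\rangle} = \bigl(\ex\, e^{-sV_{\dt}}\bigr)\,\bigl(\ex\, e^{-\langle \vect{u},\, H(\vect{V},X)\rangle}\bigr),
\end{equation*}
and since the joint Laplace transform of a nonnegative random vector determines its law, this factorisation is exactly the assertion that $V_{\dt}$ and $H(\vect{V},X)$ are independent. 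I expect the main (though modest) obstacle to be precisely this scalar-to-vector passage, since Lemma \ref{t_indep} as stated only delivers independence from each linear functional; the key point is that scale independence is preserved under nonnegative linear combinations, which is what lets me run the lemma for the whole family $\{\langle \vect{u}, H(\vect{V},X)\rangle : \vect{u}\in\reals_+^c\}$ and then appeal to the uniqueness of Laplace transforms.
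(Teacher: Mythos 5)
Your proof is correct and is essentially the argument the paper intends: the paper gives no proof at all, calling the corollary an ``obvious consequence'' of Lemma \ref{t_indep}, and your reduction via the linear functionals $\langle \vect{u}, H(\vect{V},X)\rangle$ followed by uniqueness of the joint Laplace transform is precisely the natural way to fill in that scalar-to-vector step. One small correction: the a.s.\ positivity of $V_{\dt}$ comes from $\vect{V}$ being Gamma distributed, not from the positive row sums of $X$ (that hypothesis rather ensures $H(\vect{V},X)$ is a nondegenerate nonnegative vector, e.g.\ so that $\vect{Y}X$ lies in $\Simp_c$ after normalisation in the subsequent application); this misattribution does not affect the validity of your argument.
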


\begin{proof}[Proof of Theorem \ref{ext1}.] Suppose the right relation in \eqref{e_ext1} holds, i.e., $\vect{V}X \sim G_{\vect{s}}$ for $\vect{V} \sim G_{\vect{t}}$ independent of $X$. Then, by Corollary \ref{c_indep}, for the scale independent function $H$ defined in \eqref{e_hdef}, the random vector $H(V_1, \ldots, V_r, X) \equiv \left(\frac{V_1}{V_{\dt}}, \ldots, \frac{V_r}{V_{\dt}} \right)X$ is independent of $V_{\dt}$, and therefore
\begin{equation}\label{VX}
\vect{V}X = \left(\frac{V_1}{V_{\dt}}, \ldots, \frac{V_r}{V_{\dt}} \right)X V_{\dt} \ed \left(\frac{V_1}{V_{\dt}}, \ldots, \frac{V_r}{V_{\dt}} \right)X \widetilde{V}_{\dt},
\end{equation}
where $(\widetilde{V}_1, \ldots,\widetilde{V}_r) \sim G_{\vect{t}}$ is independent of $(\vect{V},X)$. Since $\vect{V}X \sim G_{\vect{s}}$, for $\vect{Z} := (Z_1, \ldots, Z_c) \sim G_{\vect{s}}$, one has
\begin{equation}\label{VXZ}
\vect{V}X \ed \vect{Z} \ed \left(\frac{Z_1}{Z_{\dt}}, \ldots, \frac{Z_c}{Z_{\dt}} \right)\widetilde{Z}_{\dt},
\end{equation}
$(\widetilde{Z}_1, \ldots,\widetilde{Z}_c)$ being an independent copy of $\vect{Z}$ (cf.~\eqref{gamprop2}).

Equating the logarithms of the components of the vectors on the right hand sides of \eqref{VX} and \eqref{VXZ}, we obtain that
\begin{align}
&\left( \ln \left( \frac{\sum_{i=1}^r V_i X_{i,1}}{V_{\dt}} \right), \ldots, \ln \left(\frac{\sum_{i=1}^r V_i X_{i,c}}{V_{\dt}} \right) \right) +  \ln(\widetilde{V}_{\dt}) \vect{e}_c\notag\\
&\ed \left( \ln \left( \frac{Z_1}{Z_{\dt}} \right), \ldots,  \ln \left( \frac{Z_c}{Z_{\dt}} \right)\right) + \ln(\widetilde{Z}_{\dt}) \vect{e}_c,\label{e_logVXZ}
\end{align}
where $\vect{e}_c := (1,\ldots, 1) \in \reals^c$.

Since $t_{\dt} = s_{\dt}$, one has $\widetilde{V}_{\dt} \ed \widetilde{Z}_{\dt}$ as both follow $\Gamma_{t_{\dt}} = \Gamma_{s_{\dt}}$, and so letting $\psi$, $\varphi$ and $\chi$ denote the characteristic functions of the first, second (and fourth), and third terms in \eqref{e_logVXZ}, respectively, we have
\begin{equation*}
\psi(u_1, \ldots, u_c) \varphi(u_1, \ldots, u_c) = \chi(u_1, \ldots, u_c) \varphi(u_1, \ldots, u_c).
\end{equation*}
Noting that
\begin{align*}
\varphi(u_1, \ldots, u_c) &= \ex e^{i u_{\dt} \ln(V_{\dt})} = \frac{1}{\Gamma(t_{\dt})} \int_0^{\infty} e^{iu_{\dt} \ln x} x^{t_{\dt}-1} e^{-x} \, dx\\
&= \frac{1}{\Gamma(t_{\dt})} \int_0^{\infty} x^{iu_{\dt}+t_{\dt}-1} e^{-x} \, dx = \frac{\Gamma(t_{\dt}+iu)}{\Gamma(t_{\dt})} \neq 0
\end{align*}
for $t_{\dt}>0$, we conclude that $\psi \equiv \chi$ and therefore the respective random vectors have a common distribution. Hence one has
\begin{equation}\label{e_VXZ}
\left(\frac{V_1}{V_{\dt}}, \ldots,  \frac{V_r}{V_{\dt}} \right)X = \left( \frac{\sum_{i=1}^r V_i X_{i,1}}{V_{\dt}}, \ldots, \frac{\sum_{i=1}^r V_i X_{i,c}}{V_{\dt}} \right) \ed \left(\frac{Z_1}{Z_{\dt}}, \ldots,  \frac{Z_c}{Z_{\dt}} \right) \sim D_{\vect{s}}
\end{equation}
from \eqref{e_DirGam}. Observing that the left hand side of \eqref{e_VXZ} has the form $\vect{Y}X$ for $\vect{Y} \sim D_{\vect{t}}$ independent of $X$ (cf.~\eqref{e_DirGam}), we have that $\vect{Y}X \sim D_{\vect{s}}$, and so the left relation in \eqref{e_ext1} holds.

Conversely, suppose that $\vect{Y}X \sim D_{\vect{s}}$ for $\vect{Y} \sim D_{\vect{t}}$ independent of $X$. Using the same steps as above, but following them in the reverse order, one can easily conclude that $\vect{V}X \sim G_{\vect{s}}$ for $\vect{V} \sim G_{\vect{t}}$ independent of $X$. Theorem \ref{ext1} is proved.
\end{proof}

We will need a simple extension of Proposition~$2.2$ from~\cite{ChamLetac}.

\begin{proposition}\label{prop}
Let $X(1)$ be a random element of $\SMat_d$ satisfying $\cb$, and $\{X(n)\}_{n \ge 1}$ be an i.i.d.\ sequence. Then there exists a random element $\vect{W}$~of $\Simp_d$ such that
\begin{equation}\label{e1_prop}
X(n,1) \aas  \vect{e}_d^T \vect{W}
\end{equation}
as $n \rightarrow \infty$, where $\vect{e}_d^T$ denotes the transpose of $\vect{e}_d$. Furthermore, if $\vect{Y}$ is a random element of $\Simp_d$ that is independent of $X(1)$, then $\vect{Y}X(1) \ed \vect{Y}$ iff $\vect{Y} \ed \vect{W}$. 
\end{proposition}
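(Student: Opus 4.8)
The plan is to prove the almost sure convergence by a coefficient-of-ergodicity (contraction) argument and then to identify the limit law by a stationarity/shift argument. For a stochastic matrix $P=(P_{i,j})\in\SMat_d$ write $\delta(P):=\max_{i,i'}\sum_{j=1}^d\abs{P_{i,j}-P_{i',j}}$ for the maximal $\ell^1$-distance between its rows, and let $\tau(P):=\tfrac12\delta(P)$ denote the associated coefficient of ergodicity. First I would observe that, since $X(n,1)=X(n)\,X(n-1,1)$, each row of $X(n,1)$ is a convex combination of the rows of $X(n-1,1)$; hence the convex hull of the rows of $X(n,1)$ is contained in that of $X(n-1,1)$, and in particular $\delta(X(n,1))$ is non-increasing in $n$. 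Because these row hulls form a nested sequence of compact convex subsets of $\Simp_d$, it suffices to show that their diameter $\delta(X(n,1))$ tends to $0$ a.s.: the intersection is then a single (random) point $\vect{W}\in\Simp_d$, every row of $X(n,1)$ converges to $\vect{W}$, and therefore $X(n,1)\aas\vect{e}_d^T\vect{W}$.

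To force $\delta(X(n,1))\to0$ I would use $\cb$ together with the submultiplicativity of the ergodic coefficient. Grouping the factors into i.i.d.\ blocks $B_k:=X(km,(k-1)m+1)$, $k\ge1$, each distributed as $X(m,1)$, the standard estimate $\delta(PQ)\le\tau(P)\,\delta(Q)$ gives $\delta(X(Km,1))\le 2\prod_{k=1}^K\tau(B_k)$. A strictly positive stochastic matrix has ergodic coefficient strictly less than $1$, so $\cb$ yields $\pr(\tau(B_1)<1)\ge\pr(X(m,1)\in\SMat^+_d)>0$; choosing $\varepsilon>0$ with $\pr(\tau(B_1)\le1-\varepsilon)>0$, the Borel--Cantelli lemma shows that a.s.\ infinitely many blocks satisfy $\tau(B_k)\le1-\varepsilon$, whence $\prod_{k=1}^K\tau(B_k)\to0$ and $\delta(X(Km,1))\to0$ a.s. Monotonicity then propagates the limit to the full sequence, completing the first assertion.

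For the characterisation I would first record the pathwise fact that, since $\vect{e}_d^T\vect{W}$ has identical rows, $\vect{y}X(n,1)\to\vect{W}$ for every $\vect{y}\in\Simp_d$, uniformly, and hence $\vect{Y}X(n,1)\aas\vect{W}$ for any $\Simp_d$-valued $\vect{Y}$. For the ``if'' direction, a shift argument identifies the limit as a stationary law: writing $X(n,1)=X(n,2)\,X(1)$ and letting $n\to\infty$ gives $\vect{W}=\vect{W}'X(1)$ a.s., where $\vect{W}'$ is the limit built from the shifted sequence $X(2),X(3),\dots$, so that $\vect{W}'\ed\vect{W}$ with $\vect{W}'$ independent of $X(1)$; consequently, if $\vect{Y}\ed\vect{W}$ is independent of $X(1)$, then $\vect{Y}X(1)\ed\vect{W}'X(1)\ed\vect{W}\ed\vect{Y}$. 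For the ``only if'' direction, suppose $\vect{Y}X(1)\ed\vect{Y}$ with $\vect{Y}$ independent of $X(1)$. Iterating one-step stationarity along an independent i.i.d.\ copy of the driving sequence and using the distributional reversal $X(n,1)\ed X(1,n)$ noted in the introduction, I would deduce $\vect{Y}\ed\vect{Y}X(n,1)$ for every $n$; letting $n\to\infty$ and using $\vect{Y}X(n,1)\cd\vect{W}$ yields $\vect{Y}\ed\vect{W}$.

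The main obstacle is the second step: $\cb$ guarantees positivity of a block only with positive probability and only delivers $\tau<1$, not a deterministic bound away from $1$, so monotonicity of $\delta$ by itself is insufficient. The key is to exploit the i.i.d.\ block structure to extract a fixed $\varepsilon$-contraction that occurs with positive probability and then invoke Borel--Cantelli. Care is also needed in the ``only if'' direction to justify, via reversal of the product order, that one-step stationarity propagates to $\vect{Y}\ed\vect{Y}X(n,1)$ before passing to the limit.
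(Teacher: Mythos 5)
Your proposal is correct, but it takes a genuinely different and more self-contained route than the paper. The paper's proof is essentially a reduction to the literature: it applies Proposition~2.2 of \cite{ChamLetac} to the i.i.d.\ blocks $X(nm,(n-1)m+1)$ (whose common law, by $\cb$, charges $\SMat^+_d$), obtaining the a.s.\ limit $\vect{e}_d^T\vect{W}$ along the subsequence $\{X(nm,1)\}_{n\ge1}$; it then fills in the gaps $k\in(nm,(n+1)m)$ by writing $X(k,1)=X(k,nm+1)X(nm,1)$ and using $X\vect{e}_d^T=\vect{e}_d^T$ to see that the interpolated terms converge to the same limit; the ``iff'' characterisation is simply quoted as following from the proof of that cited proposition. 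You instead prove everything from scratch: the Dobrushin-coefficient submultiplicativity together with the second Borel--Cantelli lemma forces the row-hull diameter $\delta(X(Km,1))\to0$ a.s., and your observation that the row hulls are nested (so $\delta(X(n,1))$ is monotone) replaces the paper's interpolation step and yields full-sequence convergence in one stroke; your shift argument ($\vect{W}=\vect{W}'X(1)$ with $\vect{W}'\ed\vect{W}$ independent of $X(1)$) and your iteration-plus-order-reversal argument ($\vect{Y}X(1)\ed\vect{Y}$ implies $\vect{Y}\ed\vect{Y}X(1,n)\ed\vect{Y}X(n,1)\cd\vect{W}$) supply exactly the two directions of the characterisation that the paper leaves implicit. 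All steps check out, including the two delicate ones you flag: extracting a fixed $\varepsilon$-contraction with positive probability (rather than just $\tau<1$), and the joint-law induction $(\vect{Y}X(1,n-1),X(n))\ed(\vect{Y},X(1))$ that legitimises propagating one-step stationarity before reversing the order. What the paper's approach buys is brevity; what yours buys is a proof that is independent of \cite{ChamLetac} and makes the contraction mechanism behind Proposition~2.2 explicit, effectively re-deriving that cited result in the process.
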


\begin{proof}
By Proposition $2.2$ in \cite{ChamLetac}, there exists a random element $\vect{W}$ of $\Simp_d$ such that
\begin{equation}\label{e2_prop}
X(nm,1) \aas \vect{e}_d^T \vect{W}
\end{equation}
as $n \rightarrow \infty$, where $m< \infty$ is from $\cb$. For $k \in (nm, (n+1)m) \cap \naturals$, one has
\begin{align*}
X(k,1) &= X(k,nm+1) X(nm,1)\\
&=X(k,nm+1)\vect{e}_d^T \vect{W} +X(k,nm+1) \left(X(nm,1) - \vect{e}_d^T \vect{W} \right)\\
&= \vect{e}_d^T \vect{W} +  o(1) \quad a.s.
\end{align*}
as $n \rightarrow \infty$, since $X\vect{e}_d^T = \vect{e}_d^T$ for any $X \in \SMat_d$. Hence, the limit in \eqref{e1_prop} exists and coincides with that in \eqref{e2_prop}. The second part follows immediately from the proof of Proposition $2.2$ in \cite{ChamLetac}.
\end{proof}

\begin{proof}[Proof of Theorem \ref{ext2}.] (i) Suppose $\ca$ and $\cb$ hold. Then, by Proposition \ref{prop}, there exists a random element $\vect{W}$ of $\Simp_d$ such that
\begin{equation*}
X(n,1) \aas \widehat{X} \equiv \vect{e}_d^T \vect{W}
\end{equation*}
as $n \rightarrow \infty$, and therefore $\Aa$ and $\Ab$ hold. From condition $\ca$ and Theorem \ref{ext1}, it follows that $\vect{Y} X(1) \ed \vect{Y}$ for $\vect{Y} \sim D_{\vect{t}}$ independent of $X(1)$. By Proposition \ref{prop}, the law of $\vect{Y}$ coincides with the distribution of the limit $\widehat{X}^{(i)}, i=1, 2, \ldots, d$, i.e.
\begin{equation}\label{e_limit}
\Lim =  \widehat{X}^{(2)} = \cdots = \widehat{X}^{(d)}\ed \vect{Y} \sim D_{\vect{t}},
\end{equation}
and so $\Ac$ holds. We have proved that $\law(X) \in \D_d$.

Conversely, let $\law(X) \in \D_d$. As $X(m,1)$ converges a.s.\ to $\widehat{X} \in \SMat_d^+$, one must have $X(m,1) \in \SMat_d^+$ for all large enough $m$, and so $\cb$ holds. Furthermore, clearly
\begin{equation*}
\widehat{X} = \lim_{n \rightarrow \infty} X(n,1) = \lim_{n \rightarrow \infty} X(n,2)X(1) = \widehat{X}'X(1),
\end{equation*}
where $\widehat{X}' \ed \widehat{X}$ is independent of $X(1)$. In particular, one has $\widehat{X}^{(1)} X(1) \ed \widehat{X}^{(1)} \sim D_{\vect{t}}$ for some $\vect{t} \in \reals_+^d$. Applying Theorem \ref{ext1}, we have that $\vect{V}X(1) \ed \vect{V}$ for $\vect{V} \sim G_{\vect{t}}$, and so $\ca$ holds.

(ii) If \eqref{e_converge} holds, then by (i) we have that $\ca$ and $\cb$ are satisfied, in which case we have \eqref{e_limit}, where $\vect{t}$ is the same as in $\ca$. Furthermore, for $\vect{Y}$ a random element of $\Simp_d$ that is independent of $X$, one has by Proposition \ref{prop} that $\vect{Y}X \ed \vect{Y}$ iff $\vect{Y} \ed \Lim$, where $\Lim \sim D_{\vect{t}}$ by \eqref{e_limit}.

\end{proof}

% Random Exchange Models ------------------------------------------------------------------------------------------------------

\section{Examples and Applications}\label{S_App}

\begin{subsection}{Random exchange models}\label{SS_Potlatch}

Consider the following random exchange model, which is a discrete time analogue of certain continuous time Markov processes, called Potlatch models (see for instance \cite{HolleyLiggett} and \cite{LiggettSpitzer}). Suppose we have $d < \infty$ bins labelled by numbers $1, 2, \ldots, d$, that hold amounts $q_k(n)$, $k=1, \ldots, d$, of a homogeneous commodity at times $n=0,1, 2, \ldots,$ respectively. The dynamics of the model is as follows: at time $n \ge 1$, the vector $\vect{q}(n-1) := (q_1(n-1), \ldots, q_{d}(n-1))$ changes to $\vect{q}(n) := \vect{q}(n-1)X(n)$, where $\{X(n)\}_{n \ge 1}$ are i.i.d.\ random elements of $\SMat_d$ with distribution $\law(X)$, which generates the Markov chain
\begin{equation}\label{e_Potlatch}
\vect{q}(n) = \vect{q}(0) X(1,n), \quad n \ge 1.
\end{equation}
Clearly $\sum_{k=1}^d \vect{q}_k(n) = \sum_{k=1}^d \vect{q}_k(0)$ for all $n$, and without loss of generality, we assume that this quantity is equal to one.

As noted in \cite{ChamLetac}, it is easy to prove that Markov chain \eqref{e_Potlatch} has a stationary distribution when $\law(X) \in \D_d$. Indeed, defining the random maps $K_n: \Simp_d \rightarrow \Simp_d$ by $K_n(\vect{y}) := \vect{y}X(n)$, one has that, for any $\vect{y} \in \Simp_d$,
\begin{equation*}
K_1 \circ \cdots \circ K_n (\vect{y}) = \vect{y}X(n,1) \overset{a.s.}{\longrightarrow} \Lim \; \text{as } n \rightarrow \infty.
\end{equation*}
Applying Proposition $1$ in \cite{ChamLetac2}, we obtain that $\law(\Lim)$ is the stationary distribution of Markov chain \eqref{e_Potlatch}.

The random exchange model \eqref{e_Potlatch} is a higher dimension analogue of the stochastic give-and-take model studied in \cite{DeGroot} and introduced in its deterministic form in the context of human genetics in \cite{Li}. This model is applicable to the study of dynamical systems (see for instance \cite{Bruneau}), e.g.\ it provides a model of a closed economy, where agents exchange real wealth at each step in a random manner.

Now we consider two special cases of random stochastic matrices $X$ with $\law(X) \in \D_d$. The purpose of these examples is to demonstrate that none of the conditions $\Aa$--$\Ac$ from \cite{ChamLetac} is necessary. We will also discuss the respective special cases of random exchange model \eqref{e_Potlatch}. Our first example leads to a generalisation of the result in \cite{ChamLetac} to the case of ``extended" Dirichlet distributions, where the parameters of the distribution are permitted to be zero. Our second example demonstrates that the rows of $X$ need not be independent.

\begin{example}\label{E_EDir}
For a vector $\vect{a} \in \Mat_{1,d}$, we set $D_{\vect{a}}$ to be the weak limit of the distribution $D_{\vect{a} + \epsilon}$ as $\epsilon \downarrow 0$, with the usual interpretation of the sum $\vect{a} + \epsilon$ with $\vect{a} \in \reals^d$ and $\epsilon \in \reals$. In other words, the components of $\vect{Y} \sim D_{\vect{a}}$ that correspond to zero components of $\vect{a}$ are identically zero, whereas the subvector of $\vect{Y}$ consisting of the components $Y_j$ of that random vector that correspond to $a_j>0$ form a usual Dirichlet distributed vector. Likewise, for a matrix $A \in \Mat_{r,c}$, we set $D_A$ to be the weak limit of the distribution of $D_{A+\epsilon}$ as $\epsilon \downarrow 0$. We define the distributions $G_{\vect{a}}$ and $G_A$ in a similar way.

Let $A = (\alpha_{i,j}) \in \Mat_d$ with $\alpha_{i \dt} = \alpha_{\dt i} >0$, for $i=1, \ldots, d$. If $X \sim D_A$ and $X$ satisfies $\cb$, then it turns out that $\law(X) \in \D_d$ and $\Lim  \sim D_{(\alpha_{1 \dt}, \ldots, \alpha_{d \dt})}$, and so $D_{(\alpha_{1 \dt}, \ldots, \alpha_{d \dt})}$ is the stationary distribution of Markov chain \eqref{e_Potlatch}.

Indeed, by Theorem \ref{ext2} it suffices to show that $\vect{V}X \ed \vect{V}$ for $\vect{V} \sim G_{(\alpha_{1 \dt}, \ldots, \alpha_{d \dt})}$ independent of $X$. To do that, let $(\vect{V},Z)  \sim G_{(\alpha_{1 \dt}, \ldots, \alpha_{d \dt})} \otimes G_A$, and define $X \in \SMat_d$ by \eqref{ZrepX}. Then $\vect{V}$ is independent of $X$, and $\vect{V}X \sim G_{(\alpha_{1 \dt}, \ldots, \alpha_{d \dt})}$ by \eqref{e_dirmatrix} and \eqref{e_dirmatrix2}, as required.

In particular, we have obtained the stationary distribution for the following simple model, which, to the best of the author's knowledge, has not previously been studied: at time $n \ge 1$, a uniform proportion of the commodity previously held in bin $k$, $k=1,2, \ldots, d$, is shifted to the (neighbouring) bin $k+1$ (mod $d$). In this case vector $\vect{q}(n)$ is defined by \eqref{e_Potlatch} with 
\begin{equation}\label{e_example1}
X :=
\begin{pmatrix}
U_1 & 1-U_1 & 0 & \cdots & 0\\
0 & U_2 & 1-U_2 & \ddots & \vdots\\
\vdots & \ddots & \ddots & \ddots & 0\\
0 & \cdots & 0 & U_{d-1} & 1-U_{d-1}\\
1-U_d & 0 & \cdots & 0 & U_d\\
\end{pmatrix},
\end{equation}
where the $U_k$, $k=1, \ldots, d$, are i.i.d.\ uniformly distributed random variables on $(0,1)$. Observing that $X$ defined by \eqref{e_example1} satisfies $\cb$ for $m=d-1$, we have from above that $\Lim \sim D_{(2, \ldots, 2)}$, and so $D_{(2, \ldots, 2)}$ is the stationary distribution of Markov chain \eqref{e_Potlatch} with $X(n)$ distributed as \eqref{e_example1}.
\end{example}

\begin{example}
In this example, we consider a random stochastic matrix $X$ with all rows dependent, which generates Markov chain \eqref{e_Potlatch} with the same stationary distribution as that corresponding to the random stochastic matrix \eqref{e_example1}. The behaviour of this model is controlled by the decisions of a ``leader" as follows. At time $n\ge1$, the ``leader" shifts a uniform proportion of the commodity held in bin $1$ to bin $2$. If the proportion shifted is greater than $1/2$, then no other shifts occur in the system at time $n$. However, if the proportion shifted is less than or equal to $1/2$, then the commodity previously held in bin $k$, $k=2,3, \ldots, d$, $d \ge 2$, is shifted to the (neighbouring) bin $k+1$ (mod $d$). In this model, the random vector $\vect{q}(n)$ is given by \eqref{e_Potlatch}, with the random element $X \in \SMat_d$ defined by 
\begin{equation}\label{e_example2}
X :=
\begin{pmatrix}
U & 1-U & 0 & \cdots & 0\\
0 & I & 1-I & \ddots & \vdots\\
\vdots & \ddots & \ddots & \ddots & 0\\
0 & \cdots & 0 & I & 1-I\\
1-I & 0 & \cdots & 0 & I\\
\end{pmatrix},
\end{equation}
where $U$ is a uniform random variable on $(0,1)$, and $I := \indicator_{\{U>1/2\}}$, $\indicator_A$ being the indicator function for event $A$.

We will show that $\law(X) \in \D_d$ and $\widehat{X}^{(1)} \sim D_{(2, \ldots, 2)}$. By Theorem \ref{ext2}, it suffices to show that $\ca$ and $\cb$ hold for $X$, $\vect{t} = (2, \ldots, 2)$ being the vector from $\ca$. It is not hard to directly verify that $X$ defined by \eqref{e_example2}  satisfies $\cb$ for $m=2d-2$. Now let $\vect{V} \sim G_{(2, \ldots, 2)}$ be independent of $X$, and denote by $\varphi(u_1, \ldots, u_d)$ the characteristic function of $\vect{V}X$. Then, setting $u_{d+1} := u_1$, we have
\begin{align*}
\varphi(u_1, \ldots, u_d) &= \ex \exp \left \{i(u_1 U + u_2(1-U)) V_1 + i\sum_{j=2}^d (u_j I + u_{j+1}(1-I))V_j \right\}\\
&= \ex \exp \left\{i\sum_{j=2}^d u_{j+1}V_j \right\} \int_0^{1/2} \frac{ds}{[1-i(u_1s+u_2(1-s))]^2}\\
& \quad + \ex \exp \left\{i \sum_{j=2}^d u_j V_j \right\} \int_{1/2}^{1} \frac{ds}{[1-i(u_1s+u_2(1-s))]^2}\\
&= \Biggr( \frac{(1-iu_2)^2}{\prod_{k=1}^d (1-iu_k)^2} \Biggr) \Biggr( \frac{i}{(1-iu_2)(2i+u_1+u_2)} \Biggr)\\
& \quad + \Biggr( \frac{(1-iu_1)^2}{\prod_{k=1}^d (1-iu_k)^2} \Biggr) \Biggr( \frac{i}{(1-iu_1)(2i+u_1+u_2)} \Biggr)\\
&=  \prod_{k=1}^d (1-iu_k)^{-2},
\end{align*}
which is the characteristic function of $\vect{V}$ as well, and so we conclude that $\vect{V}X \ed \vect{V}$. Therefore $\ca$ holds with $\vect{t} = (2, \ldots, 2)$, as required.
\end{example}

\end{subsection}

% Random Nested Simplices  ------------------------------------------------------------------------------------------------------
 
 \begin{subsection}{Random nested simplices}\label{S_RNT}
The study of random triangles and, in particular, nested sequences of random triangles, has been extensive in the probabilistic literature (see for instance \cite{LetacScarsini} and references therein). In one such study \cite{LetacScarsini}, the authors use Theorem $1.2$ in \cite{ChamLetac} (see Theorem \ref{T2} above) to prove that the limiting distribution of certain random nested simplices is Dirichlet. More precisely, they consider a real $(d-1)$-dimensional affine space $E$ and a convex hull of $d$ points $\vect{p}_1, \ldots, \vect{p}_d$ in $E$ which forms a simplex $S$. Choosing $d$ points $\vect{p}_1(1), \ldots, \vect{p}_d(1)$ in $S$ and taking their convex hull yields a new simplex $S(1) \subset S$. Similarly, choosing points $\vect{p}_1(2), \ldots, \vect{p}_d(2)$ in $S(1)$ to obtain $S(2)$ spanned by them, and continuing in this fashion, we obtain a sequence $\{S(n)\}_{n \ge 1}$ of nested simplices.

Suppose that, in the above procedure, the points are chosen at random using the following mechanism. Let $\{X(n)\}_{n \ge 1}$ be a sequence of i.i.d.\ elements of $\SMat_d$ with distribution $D_A$, $A \in \Mat_d$ satisfying \eqref{condA}. Then, for each $n=1,2,\ldots$ and $k=1,\ldots,d$, the point $\vect{p}_k(n)$ is specified by its barycentric coordinates given by $X^{(k)}(n)$ with respect to $(\vect{p}_1(n-1), \ldots, \vect{p}_d(n-1))$. As shown in \cite{LetacScarsini}, the so defined simplices $S(n)$ shrink to a random point $\vect{Y} \in E$ as $n \rightarrow \infty$:
\begin{equation}\label{e_tet}
\vect{p}_k(n) \aas \vect{Y}, \quad k=1, \ldots, d,
\end{equation}
where the barycentric coordinates of $\vect{Y}$ with respect to $(\vect{p}_1, \ldots, \vect{p}_d)$ are Dirichlet distributed with parameter $(\alpha_{1 \dt}, \ldots, \alpha_{d \dt})$. We extend this result as follows (see Section $2$ in \cite{LetacScarsini} for definitions related to affine spaces, affine frames and barycentric coordinates):

\begin{theorem}\label{T_RNT}
If $X(1)$ satisfies $\ca$ and $\cb$, then \eqref{e_tet} holds true. Furthermore, the barycentric coordinates of $\vect{Y}$ with respect to $(\vect{p}_1, \ldots, \vect{p}_d)$ have distribution $D_{\vect{t}}$, $\vect{t}$ being the vector from $\cb$.
\end{theorem}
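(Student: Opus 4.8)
The plan is to reduce the statement entirely to the convergence of the left products $X(n,1)$, which is already governed by Theorem \ref{ext2}, by first identifying those products with a matrix of barycentric coordinates. Starting from the defining relation $\vect{p}_k(n) = \sum_{j=1}^d X_{k,j}(n)\, \vect{p}_j(n-1)$, I would introduce the matrix $B(n) = (B_{k,l}(n))$ whose $k$th row lists the barycentric coordinates of $\vect{p}_k(n)$ with respect to the fixed frame $(\vect{p}_1, \ldots, \vect{p}_d)$. Substituting $\vect{p}_j(n-1) = \sum_l B_{j,l}(n-1)\, \vect{p}_l$ into the defining relation and collecting the coefficient of $\vect{p}_l$ yields $B(n) = X(n) B(n-1)$, and since $B(0)$ is the identity (as $\vect{p}_j(0) = \vect{p}_j$), induction gives $B(n) = X(n) X(n-1) \cdots X(1) = X(n,1)$. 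I expect this bookkeeping step -- getting the multiplication order to match the left product rather than the right product -- to be the only genuine subtlety of the argument.

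Once this identification is in place, I would invoke Theorem \ref{ext2}: since $X(1)$ satisfies $\ca$ and $\cb$, we have $\law(X) \in \D_d$, so by Proposition \ref{prop} there is a random element $\vect{W}$ of $\Simp_d$ with $X(n,1) \aas \vect{e}_d^T \vect{W}$ as $n \rightarrow \infty$, where $\vect{W} \ed \Lim \sim D_{\vect{t}}$ and $\vect{t}$ is the vector appearing in $\ca$. In particular every row of $X(n,1)$, equivalently every row of $B(n)$, converges almost surely to the common limit $\vect{W} = (W_1, \ldots, W_d)$.

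Finally I would transfer this convergence from coordinates back to the affine space $E$. The barycentric-coordinate map $\vect{b} \mapsto \sum_{j=1}^d b_j \vect{p}_j$ is affine, hence continuous, into the finite-dimensional space $E$, so componentwise a.s.\ convergence of the $k$th row of $B(n)$ to $\vect{W}$ forces $\vect{p}_k(n) \aas \sum_{j=1}^d W_j \vect{p}_j =: \vect{Y}$ for each $k = 1, \ldots, d$, which is precisely \eqref{e_tet}. Reading off the coordinates of the limit point, the barycentric coordinates of $\vect{Y}$ with respect to $(\vect{p}_1, \ldots, \vect{p}_d)$ are exactly $\vect{W} \sim D_{\vect{t}}$, completing the proof. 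This last transfer is routine because $E$ is finite dimensional; all of the probabilistic content has already been supplied by Theorem \ref{ext2} and Proposition \ref{prop}, and the present theorem is essentially a translation of those results into the language of nested simplices.
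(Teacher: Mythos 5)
Your proposal is correct and follows essentially the same route as the paper: identify the barycentric coordinates of the vertices with the left products $X(n,1)$, then apply Theorem \ref{ext2} and Proposition \ref{prop} to get a.s.\ convergence to a common $D_{\vect{t}}$-distributed limit (note the vector $\vect{t}$ indeed comes from $\ca$, not $\cb$ as the statement's typo suggests, and your proposal gets this right). The only cosmetic difference is that you derive the recursion $B(n) = X(n)B(n-1)$ and hence $B(n) = X(n,1)$ explicitly, whereas the paper cites the proof of Theorem 2.2 in Letac and Scarsini (1998) for the equivalent identity $\vect{B}(n) = \vect{B}(0)X(n,1)$ and then specialises $\vect{B}(0)$ to the standard basis vectors.
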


\begin{proof}
The proof of Theorem \ref{T_RNT} basically repeats that of Theorem $2.2$ from \cite{LetacScarsini}, except we use Theorem \ref{ext2} in place of Theorem $\ref{T2}$ above. For $\vect{B}(0) \in \Simp_d$, define $\vect{Z}(n) \in S(n)$ as the point with barycentric coordinates $\vect{B}(0)$ with respect to the affine frame $(\vect{p}_1(n), \ldots, \vect{p}_d(n))$, and denote by $\vect{B}(n)$ the barycentric coordinates of $\vect{Z}(n)$ with respect to the affine frame $(\vect{p}_1, \ldots, \vect{p}_d)$. Then, as shown in the proof of Theorem 2.2 from \cite{LetacScarsini}, one has
\begin{equation*}
\vect{B}(n) = \vect{B}(0) X(n,1), \quad n=0,1, \ldots
\end{equation*}

Now suppose $X(1)$ satisfies $\ca$ and $\cb$. By Theorem \ref{ext2}, $\law(X(1)) \in \D_d$, with $\widehat{X}^{(1)} = (Y_1, \ldots, Y_d) \sim D_{\vect{t}}$, $\vect{t}$ being the vector from $\cb$. Then clearly $\vect{B}(n) \aas \Lim$ as $n \rightarrow \infty$ for any $\vect{B}(0)$. In particular, for any $k=1, \ldots, d$, we can set $\vect{B}(0) := (0, \ldots,0, 1, 0, \ldots, 0)$, where $1$ is at the $k$th place, to obtain that $\vect{p}_k(n) \aas \vect{Y}$ as $n \rightarrow \infty$, where $\vect{Y}$ has barycentric coordinates $(Y_1, \ldots, Y_d)$ with respect to $(\vect{p}_1, \ldots, \vect{p}_d)$, as required.
\end{proof}

\end{subsection}

% Service networks with polling  ----------------------------------------------------------------------------------------------------------------------

\begin{subsection}{Service networks with polling}
The well-known story of Buridan's donkey motivated the authors of \cite{Stoyanov} to consider the following model. Let $\vect{p}_1, \ldots, \vect{p}_d$ be $d \ge 2$ points in the plane. Starting at a point $\vect{R}(0)$ inside the convex hull of $(\vect{p}_1, \ldots, \vect{p}_d)$ at time $0$, at time $td+r$, $t\ge 0$, $r \in \{1,\ldots, d\}$, the donkey moves from the point $\vect{R}(td+r-1)$ to a point $\vect{R}(td+r)$ which is uniformly distributed on the straight line segment connecting the points $\vect{R}(td+r-1)$ and $\vect{p}_r$. As noted in \cite{Letac}, although it is easy to prove existence of the stationary distributions for the $d$ homogeneous Markov chains $\{\vect{R}(td+r)\}_{t \ge 0}$, $r=1, \ldots d$, their computation for $d > 2$ is a difficult problem (the authors of \cite{Stoyanov} focus on the case where $d=2$).

A modification of that scheme in which the donkey travels inside a simplex $S$ with vertices $\vect{p}_1, \ldots, \vect{p}_d$ in $\reals^{d-1}$ by choosing on step $td+r$ its new position at random inside the convex hull spanned by its current location and vectors from the set $\{\vect{p}_1, \ldots, \vect{p}_d\} \backslash \{\vect{p}_r\}$ was considered in \cite{Letac}. The main result of \cite{Letac} establishing the form of the stationary distribution of the thus modified donkey walk was actually proved for yet another version of the model. Namely, rather than the current position of the donkey determining the range for the next step, one instead takes a point with barycentric coordinates (with respect to the affine frame ($\vect{p}_1, \ldots, \vect{p}_d$)) taken from rows of i.i.d.\ random stochastic matrices.

Specifically, let $X := (X_{i,j})$ be a random element of $\SMat_d$, and $\{X(t)\}_{t \ge 0}$ be a sequence of i.i.d.\ random matrices with distribution $\law(X)$. The donkey's position at time $n$ is specified by the vector $\vect{B}(n) = (B_1(n), \ldots, B_d(n)) \in \Simp_d$ of its barycentric coordinates with respect to the affine frame $(\vect{p}_1, \ldots, \vect{p}_d)$. Starting at a non-random point $\vect{B}(0)$, for $r =1, \ldots, d$ and $t=0,1, \ldots,$ given $\vect{B}(td+r-1) = (x_1, \ldots, x_d)$, one has
\begin{equation}\label{Btdr_e1}
\vect{B}(td + r) = (x_1, \ldots, x_{r-1}, 0, x_{r+1}, \ldots, x_d)  +  (x_rX_{r,1}(t), \ldots, x_rX_{r,d}(t)).  
\end{equation}
In other words, setting $\tilde{\vect{p}}_r := \sum_{j=1}^d X_{r,j}(t) \vect{p}_j$, the location of the donkey at time $td+r$ will have the same barycentric coordinates in the frame $(\vect{p}_1, \ldots, \vect{p}_{r-1}, \tilde{\vect{p}}_r, \vect{p}_{r+1}, \vect{p}_d)$ as those those for the donkey's location at time $td+r-1$ in the original frame $(\vect{p}_1, \ldots, \vect{p}_d)$. It is clear that each of the sequences $\{\vect{B}(td+r)\}_{t \ge 0}$, $r=1,\ldots,d$, forms a homogeneous Markov chain. 

This model actually describes the progression of a customer in the following discrete-time closed network with polling. Suppose there are $d$ buffer nodes in the network, accessed in cyclic order by a single server, with customers never leaving the network. At time $n=1$, the server accesses node $1$ and moves customers from that node to other nodes in the network randomly, with transition probabilities taken from the first row of a random $X(1) \in \SMat_d$. At time $n \le d$, the server accesses node $n$, and similarly moves customers from that node to other nodes with transition probabilities taken from the $n$th row of $X(1)$. The same procedure is repeated in a cyclical fashion, with the server accessing node $1$ at time $n = d+1$, and so on, with transition probabilities taken from the rows of random elements $X(j) \in \SMat_d$ for the $j$th cycle. In this formulation, the vector $\vect{B}(n)$ represents the distribution of customers in the network after $n$ steps, with initial distribution $\vect{B}(0)$. 

Markov chains with the cyclic property that each of the sequences $\{\vect{B}(td+r)\}_{t \ge 0}$, $r=1,\ldots,d$, forms a homogeneous Markov chain, were also considered in \cite{Dufresne1998}, where several examples of such Markov chains were provided.

The following assertion was proved in \cite{Letac} (instead of the stated Theorem 3 concerning the ``true donkey walk" discussed at the beginning of this section).

\begin{theorem}\label{t_donkey1}
\emph{(\cite{Letac})}
Let $A = (\alpha_{i,j}) \in \Mat^+_d$ satisfy \eqref{condA}, and $X \sim D_A$. Then, for $r=1, \ldots, d$, the stationary distribution of the $r$th homogeneous Markov chain $\{\vect{B}(td + r)\}_{t \ge 0}$ given by \eqref{Btdr_e1} is Dirichlet with parameters $(\beta_{r,1}, \ldots, \beta_{r,d})$ defined by
\begin{equation}\label{e_vrj}
\beta_{r,j} := \left\{ \begin{array}{ll} 
\sum_{i=j}^r \alpha_{i,j} & \textrm{for $j \le r$},\\ 
\sum_{i=j}^{d+r} \alpha_{i, j} & \textrm{for $j > r$},
\end{array} \right.
\end{equation}
with the convention that $\alpha_{i, j} = \alpha_{i, j'}$ when $j = j'$ \emph{(mod }$d$\emph{)}.
\end{theorem}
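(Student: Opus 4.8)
The plan is to recognise each chain $\{\vect{B}(td+r)\}_{t \ge 0}$ as a random exchange model to which Theorem \ref{ext2} applies, and then to read off its stationary law. Write $R_i(X)$ for the stochastic matrix that coincides with the identity except in its $i$th row, which is replaced by the $i$th row of $X$; then \eqref{Btdr_e1} is precisely $\vect{B}(td+r) = \vect{B}(td+r-1)R_r(X(t))$. Composing the steps $r+1,\ldots,d$ of cycle $t$ with the steps $1,\ldots,r$ of cycle $t+1$ gives
\begin{equation*}
\vect{B}((t+1)d+r) = \vect{B}(td+r)\,\Phi_r(t), \qquad \Phi_r(t) := R_{r+1}(X(t))\cdots R_d(X(t))\,R_1(X(t+1))\cdots R_r(X(t+1)).
\end{equation*}
Although $\Phi_r(t)$ and $\Phi_r(t+1)$ share the matrix $X(t+1)$, they use its rows $1,\ldots,r$ and $r+1,\ldots,d$ respectively; since the rows of a $D_A$-matrix are independent, the $\Phi_r(t)$ are i.i.d. Thus the $r$th chain is a random exchange model driven by a generic copy $\Phi_r$ built from $d$ independent rows $X^{(i)} \sim D_{(\alpha_{i,1},\ldots,\alpha_{i,d})}$ applied in cyclic order, and by Theorem \ref{ext2} it suffices to verify $\ca$ and $\cb$ for $\Phi_r$, the vector $\vect{t}$ of $\ca$ then being the Dirichlet parameter of the stationary law. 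I will show $\ca$ holds with $\vect{t} = \vect{\beta}_r := (\beta_{r,1},\ldots,\beta_{r,d})$ given by \eqref{e_vrj}.

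Condition $\cb$ is immediate. Since $A \in \Mat^+_d$, every Dirichlet row is a.s.\ positive, and because row $r$ is updated last in the cycle, any unit of mass starting at a node $a$ is routed to node $r$ before that final update (weight $X_{a,r}>0$ when $a\neq r$, and mass at $r$ simply stays when $a=r$) and is then spread to every node $b$ at the final step (weight $X_{r,b}>0$). Hence $\Phi_r \in \SMat^+_d$ a.s.\ and $\cb$ holds with $m=1$.

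The heart of the proof is $\ca$, which I would establish by tracking the gamma/Dirichlet structure through the cycle. The key is a \emph{splitting} step: if the current unnormalised state $\vect{V}$ has independent components with $V_k \sim \Gamma_{\alpha_{k \dt}}$ and we apply $R_k$ with an independent $D_{(\alpha_{k,1},\ldots,\alpha_{k,d})}$ row, then $(V_kX_{k,1},\ldots,V_kX_{k,d})$ are independent gammas of shapes $\alpha_{k,1},\ldots,\alpha_{k,d}$ by the converse Gamma--Dirichlet relation \eqref{gamprop2}; since $V'_k = V_kX_{k,k}$ and $V'_j = V_j + V_kX_{k,j}$ ($j\neq k$) depend on pairwise disjoint independent ingredients, the updated $\vect{V}'$ again has independent gamma components, with shape $\alpha_{k,k}$ at coordinate $k$ and shape increased by $\alpha_{k,j}$ at each $j\neq k$. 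Starting from $\vect{V}\sim G_{\vect{\beta}_r}$ and running the $d$ steps in the order $r+1,\ldots,d,1,\ldots,r$, I would check that the precondition ``coordinate $k$ has shape $\alpha_{k \dt}$ just before the $R_k$ step'' holds throughout: between two successive firings of row $k$, coordinate $k$ is reset to $\alpha_{k,k}$ and then incremented by $\alpha_{m,k}$ for each of the other $d-1$ rows $m$, so its shape is $\sum_m \alpha_{m,k} = \alpha_{\dt k}$, which equals $\alpha_{k \dt}$ \emph{precisely} by hypothesis \eqref{condA}. Hence the splitting is legitimate at every step, $\vect{V}\Phi_r \sim G_{\vect{\beta}_r}$, i.e.\ $\vect{V}\Phi_r \ed \vect{V}$, so $\ca$ holds with $\vect{t}=\vect{\beta}_r$.

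It remains to identify the fixed shape vector with $\vect{\beta}_r$. Coordinate $j$ is reset once per cycle (when row $j$ fires) and thereafter incremented only by the rows firing later in the order $r+1,\ldots,d,1,\ldots,r$; observed just after the $R_r$ step — which is where the $r$th chain is sampled — its shape is $\alpha_{j,j}$ plus $\sum \alpha_{k,j}$ over the rows $k$ following $j$. For $j\le r$ these are $k=j+1,\ldots,r$, giving $\sum_{k=j}^{r}\alpha_{k,j}$, and for $j>r$ they are $k=j+1,\ldots,d,1,\ldots,r$, giving $\sum_{k=j}^{d+r}\alpha_{k,j}$ with cyclic indexing — exactly \eqref{e_vrj}. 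With $\ca$ and $\cb$ verified, Theorem \ref{ext2} gives $\law(\Phi_r)\in\D_d$ with $\Lim \sim D_{\vect{\beta}_r}$, and the stationary-distribution argument of Section \ref{SS_Potlatch} identifies $D_{\vect{\beta}_r}$ as the stationary law of the $r$th chain. The main obstacle is the third paragraph: keeping the splitting bookkeeping correct and recognising that condition \eqref{condA} is exactly what makes each splitting step valid.
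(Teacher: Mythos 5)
Your proof is correct and follows essentially the same route as the paper: the paper obtains Theorem \ref{t_donkey1} by proving the more general Theorem \ref{t_donkey2} --- whose hypotheses are condition $\cb$ for the cyclic product $T_{r+1}(X)\cdots T_d(X)\,T_1(X)\cdots T_r(X)$ and the one-step gamma fixed-point identities \eqref{e_VSX} --- and then verifying \eqref{e_VSX} for $X \sim D_A$ under \eqref{condA} in the closing Remark via exactly your splitting step, i.e.\ \eqref{gamprop2} together with the shape matching $\beta_{r-1,r} = \alpha_{\dt r} = \alpha_{r \dt}$, so your coordinate-by-coordinate shape bookkeeping through the cycle is precisely the inline form of \eqref{e_VSX}, after which both arguments invoke Theorem \ref{ext2} and the stationarity argument from \cite{ChamLetac2}. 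The only differences are points where you are more self-contained than the paper: you verify $\cb$ directly (with $m=1$) from $A \in \Mat^+_d$, and you explain explicitly why the driving matrices $\Phi_r(t)$ are i.i.d.\ even though consecutive ones share the matrix $X(t+1)$ (disjoint, independent rows) --- two details that the paper's treatment assumes as a hypothesis or passes over.
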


For a matrix $X = (X_{i,j}) \in \Mat_d$ and $r=1,2, \ldots, d$, set
\begin{equation*}
\bigr(T_r(X) \bigr)_{ij} := (\delta_{ij} + X_{r,j} \delta_{ri} - \delta_{rj} \delta_{ij}), \quad 1 \le i,j \le d,
\end{equation*}
$\delta_{ij}$ being Kroneker's delta (i.e., $T_{r}(X)$ is the $d \times d$ identity matrix with the $r$th row replaced by the $r$th row of $X$). 

An extension of Theorem \ref{t_donkey1} is given by the following theorem.

\begin{theorem}\label{t_donkey2}
Let $X$ be a random element of $\SMat_d$ such that $T_1(X) \cdots T_d(X)$ satisfies $\cb$. Let $V = (V_{i,j}) \sim G_B$ be a random matrix independent of $X$, where $B \in \Mat^+_d$ has entries $\beta_{i,j}$ given by \eqref{e_vrj}. Furthermore, suppose that
\begin{equation}\label{e_VSX}
V^{(r)} \ed V^{(r-1)} T_{r}(X), \quad 1 \le r \le d,
\end{equation}
where $V^{(0)} := V^{(d)}$.

Then, for $r=1, \ldots, d$, the stationary distribution of the $r$th homogeneous Markov chain $\{\vect{B}(td+r)\}_{t \ge 0}$ is Dirichlet with parameters $(\beta_{r,1}, \ldots, \beta_{r,d})$.
\end{theorem}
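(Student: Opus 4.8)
The plan is to reduce the whole system to the single random matrix $S := T_1(X) \cdots T_d(X)$ that advances the walk over one complete polling cycle, and then to read off the intermediate chains from the partial products $T_1(X) \cdots T_r(X)$. Unwinding \eqref{Btdr_e1} gives $\vect{B}(td+r) = \vect{B}(td)\, T_1(X(t)) \cdots T_r(X(t))$, and at the cycle ends $\vect{B}(td) = \vect{B}(0)\, S(0) \cdots S(t-1)$, where $S(t) := T_1(X(t)) \cdots T_d(X(t))$ are i.i.d.\ copies of $S$. Each $T_r(X)$, and hence $S$, is a stochastic matrix, and by hypothesis $S$ satisfies $\cb$. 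First I would treat $\{\vect{B}(td)\}_{t \ge 0}$ as the right random walk driven by the $S(t)$; reversing the factors and applying Proposition \ref{prop} to $S$ gives $\vect{B}(td) = \vect{B}(0)\, S(0) \cdots S(t-1) \ed \vect{B}(0)\, S(t-1) \cdots S(0) \aas \vect{B}(0)\, \vect{e}_d^T \vect{W} = \vect{W}$, since $\vect{B}(0)\, \vect{e}_d^T = 1$. Thus $\vect{B}(td) \cd \vect{W}$ for every $\vect{B}(0)$, so $\law(\vect{W})$ is the stationary law of the cycle-boundary chain, and only its identification remains.

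The core step is to verify condition $\ca$ for $S$ with $\vect{t} = (\beta_{d,1}, \ldots, \beta_{d,d})$, that is, $\vect{V}^{(d)} S \ed \vect{V}^{(d)}$ for $\vect{V}^{(d)} \sim G_{(\beta_{d,1}, \ldots, \beta_{d,d})}$ independent of $X$. I would try to obtain this by telescoping \eqref{e_VSX}: with $W_0 := \vect{V}^{(d)} = \vect{V}^{(0)}$ and $W_r := W_{r-1} T_r(X)$, argue inductively that $W_r \ed \vect{V}^{(r)} \sim G_{(\beta_{r,1}, \ldots, \beta_{r,d})}$, so that after one cycle $W_d = \vect{V}^{(d)} S \ed \vect{V}^{(d)}$ once the loop is closed through $\vect{V}^{(0)} = \vect{V}^{(d)}$. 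Given this, $S$ satisfies both $\ca$ and $\cb$, so Theorem \ref{ext2} applies: $\law(S) \in \D_d$ and the limit $\vect{W}$ produced by Proposition \ref{prop} has law $D_{(\beta_{d,1}, \ldots, \beta_{d,d})}$. This settles the case $r = d$.

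For general $r$ I would pass to the limit in $\vect{B}(td+r) = \vect{B}(td)\, T_1(X(t)) \cdots T_r(X(t))$. Since $\vect{B}(td)$ is built from $X(0), \ldots, X(t-1)$ it is independent of $X(t)$, and $\vect{B}(td) \cd \vect{W} \sim D_{(\beta_{d,1}, \ldots, \beta_{d,d})}$; the continuous mapping theorem then gives $\vect{B}(td+r) \cd \vect{W}_d\, T_1(X) \cdots T_r(X)$ for a $D_{(\beta_{d,1}, \ldots, \beta_{d,d})}$ vector $\vect{W}_d$ independent of $X$. Reading \eqref{e_VSX} in its Dirichlet form through Theorem \ref{ext1} --- the hypothesis $t_{\dt} = s_{\dt}$ is automatic because every $T_r(X)$ preserves total mass --- shows that $T_r(X)$ carries a $D_{(\beta_{r-1,1}, \ldots, \beta_{r-1,d})}$ vector (independent of $X$) to a $D_{(\beta_{r,1}, \ldots, \beta_{r,d})}$ vector; telescoping from $r = 1$ then identifies the limit as $D_{(\beta_{r,1}, \ldots, \beta_{r,d})}$. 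Because this limit does not depend on $\vect{B}(0)$, it is the stationary law of the $r$th chain, as claimed.

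I expect the main obstacle to be exactly the telescoping in the two middle paragraphs. Condition \eqref{e_VSX} pins down only the \emph{marginal} law of each individual row $X^{(r)}$, whereas $\vect{V}^{(d)} S$ depends on the \emph{joint} law of all the rows of the single matrix $X$ shared across one cycle: the intermediate vector $W_{r-1}$ is a function of rows $1, \ldots, r-1$ of $X$ and so is not, in general, independent of the row $X^{(r)}$ against which $T_r(X)$ acts. Consequently the inductive step $W_{r-1} \ed \vect{V}^{(r-1)} \Rightarrow W_r \ed \vect{V}^{(r)}$ is transparent when the rows of $X$ are independent but is not immediate from \eqref{e_VSX} alone. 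The way I would carry this step out is to condition on $X$ and compute the joint Laplace transform of $\vect{V}^{(d)} S$ directly, in the spirit of Lemma \ref{t_indep}, reducing it to the per-row Mellin-type identities encoded by \eqref{e_VSX} and then assembling them; this is where the real work lies.
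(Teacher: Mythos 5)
Your proposal follows the same skeleton as the paper's proof: pass to products of the matrices $T_j(X(t))$ over complete polling cycles, use Proposition \ref{prop} (together with Proposition 1 of \cite{ChamLetac2}) for existence and uniqueness of the stationary law, and identify that law through condition $\ca$ and Theorem \ref{ext2}. The only structural difference is at $r<d$: the paper treats $\{\vect{B}(td+r)\}_{t\ge 0}$ as a random walk driven by the cyclically shifted one-cycle products $R(t) = T_{r+1}(X(t))\cdots T_d(X(t))T_1(X(t))\cdots T_r(X(t))$ and applies Theorem \ref{ext2} to $R(1)$, whereas you push the limit of the cycle-boundary chain forward through $T_1(X)\cdots T_r(X)$; your variant is, if anything, cleaner, since $\vect{B}(td)$ really is independent of $X(t)$, while the paper's identification $\vect{B}(td+r)=\vect{B}(r)R(1,t)$ pairs the head and tail factors of the \emph{same} $X(t)$, which matches the true dynamics \eqref{Btdr_e1} only when the rows of $X$ are independent. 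The decisive step in both arguments is exactly the one you flagged: deducing $V^{(d)}S \ed V^{(d)}$ (equivalently $V^{(r)} \ed V^{(r)}R(1)$) from \eqref{e_VSX}. The paper disposes of it in one line (``Now $V^{(r)} \ed V^{(r)}R(1)$ by \eqref{e_VSX}''), i.e.\ it silently performs the very telescoping you distrust; since you do not carry that step out either, your proposal is incomplete precisely where the paper's proof is thinnest.

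Unfortunately, the gap is genuine and cannot be closed by the conditional Laplace-transform computation you propose, because \eqref{e_VSX} does not determine the law of $V^{(d)}S$ at all. Each identity in \eqref{e_VSX} involves only one row of $X$ (the vector $V^{(r-1)}T_r(X)$ is a function of $V^{(r-1)}$ and $X^{(r)}$ alone), so the hypothesis constrains only the row \emph{marginals} of $X$, while $V^{(d)}S$ depends on the joint law of the rows. Concretely, take $d=2$, all $\alpha_{i,j}=1$, so that \eqref{e_vrj} gives $\beta_{1,1}=1$, $\beta_{1,2}=2$, $\beta_{2,1}=2$, $\beta_{2,2}=1$, let $U$ be uniform on $(0,1)$, and set
\begin{equation*}
X := \begin{pmatrix} U & 1-U\\ 1-U & U\end{pmatrix}.
\end{equation*}
Both identities in \eqref{e_VSX} hold by the beta--gamma algebra exactly as in the independent-row case, since each sees only a single, uniformly distributed row; moreover $S = T_1(X)T_2(X)$ has a.s.\ positive entries, so $\cb$ holds with $m=1$. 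Yet for $\vect{V}=(V_1,V_2) \sim G_{(2,1)}$ independent of $X$, the second coordinate of $\vect{V}S$ equals $V_1U(1-U)+V_2U$, whose mean is $2\cdot\frac{1}{6}+1\cdot\frac{1}{2}=\frac{5}{6}\neq 1$, so $\vect{V}S$ is not distributed as $\vect{V}$; by Theorem \ref{ext1} and Proposition \ref{prop}, the stationary law of $\{\vect{B}(2t+2)\}_{t\ge 0}$ is therefore \emph{not} $D_{(2,1)}$. Thus the telescoping genuinely fails for dependent rows, and with it the statement in this generality: what makes the argument work is an additional hypothesis, e.g.\ independence of the rows of $X$ (which is what the paper's verifying Remark uses, since there $X \sim D_A$), or a joint-distribution strengthening of \eqref{e_VSX} that lets each $T_r(X)$ act independently of the vector accumulated so far. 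Your instinct about where the real difficulty lies was exactly right; the resolution is to strengthen the hypothesis, not to find a cleverer computation.
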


\begin{proof}
Let $\{X(t)\}_{t \ge 0}$ be i.i.d.\ with law $\law(X)$, and fix $r \in \{1, \ldots, d\}$. Then
\begin{equation*}
R(t) := \left\{ \begin{array}{ll} 
T_1(X(t)) \cdots T_d(X(t)) & \textrm{for $r = d$},\\ 
T_{r+1}(X(t)) \cdots T_d(X(t)) T_1(X(t)) \cdots T_r(X(t)) & \textrm{for $r < d$},
\end{array} \right.
\end{equation*}
$t=0,1,\ldots$, are i.i.d.\ random elements in $\SMat_d$, and
\begin{equation*}
\vect{B}(td + r) = \vect{B}(r)R(1,t),
\end{equation*}
where $R(1,t)$ is given by \eqref{e_rprod}. Clearly, $R(1)$ satisfies $\cb$ since $T_1(X) \cdots T_d(X)$ satisfies $\cb$, and so by Proposition \ref{prop}, there exists a random vector $\vect{W} \in \Simp_d$ such that
\begin{equation*}
R(t,1) \aas \vect{e}_d^T \vect{W}
\end{equation*}
as $t \rightarrow \infty$. Using Proposition $1$ in \cite{ChamLetac2}, applied to the space $\Simp_d$ and to the random mappings from $\Simp_d$ to $\Simp_d$ defined by $\vect{x} \mapsto \vect{x}Z(t)$, $t \ge 0$, it follows that $\{\vect{B}(td + r)\}_{t \ge 0}$ has a unique stationary distribution.

Now $V^{(r)} \ed V^{(r)}R(1)$ by \eqref{e_VSX}, and therefore Theorem \ref{ext2} implies that $\widehat{R}^{(1)} \sim D_{(\beta_{r,1}, \ldots, \beta_{r,d})}$. Hence the stationary distribution of $\{\vect{B}(td + r)\}_{t \ge 0}$ is $D_{(\beta_{r,1}, \ldots, \beta_{r,d})}$ as required.
\end{proof}

The following remark shows that our Theorem~\ref{t_donkey2} is indeed an extension of Theorem~\ref{t_donkey1}.
 
\begin{remark}
Let $A = (\alpha_{i,j}) \in \Mat_d$ be such that $\alpha_{i \dt} = \alpha_{\dt i} >0$ for $i=1, \ldots, d$, and $B = (\beta_{i,j}) \in \Mat^+_d$, where the numbers $\beta_{i,j}$ are given by \eqref{e_vrj}. Let $(V,Z)  \sim G_B \otimes G_A$, and $X \sim D_A$ defined by \eqref{ZrepX} be such that $T_1(X^{(1)}) \cdots T_d(X^{(d)})$ satisfies $\cb$. Then, for $r=1, \ldots, d$ and using the convention that $r-1=d$ for $r=1$, one has
\begin{align*}
V^{(r-1)}T_r(X) &= (V_{r-1,1}, \ldots, V_{r-1,r-1}, 0, V_{r-1,r+1}, \ldots,  V_{r-1,d}) + V_{r-1,r} \left(\frac{Z_{r,1}}{Z_{r \dt}}, \ldots, \frac{Z_{r,d}}{Z_{r \dt}} \right)\notag\\
&\ed(V_{r-1,1}, \ldots, V_{r-1,r-1}, 0, V_{r-1,r+1}, \ldots,  V_{r-1,d}) + (Z_{r,1}, \ldots, Z_{r,d}),
\end{align*}
where the equality in distribution holds by \eqref{gamprop2} and the fact that $V_{r-1,r} \ed Z_{r \dt}$, as both follow $\Gamma_{\beta_{r-1,r}} = \Gamma_{\alpha_{r \dt}}$. It remains to observe that $\beta_{r,r} = \alpha_{r,r}$ and, for $j \neq r$, $\beta_{r,j} = \beta_{r-1,j} + \alpha_{r,j}$, hence the vector in the last line above is distributed as $V^{(r)}$.

The conditions of Theorem \ref{t_donkey2} are met, and we conclude that, for all $r=1, \ldots, d$, the homogeneous Markov chain $\{\vect{B}(td+r)\}_{t \ge 0}$ has stationary distribution $D_{(\beta_{r,1}, \ldots, \beta_{r,d})}$. In particular, for $X$ defined by \eqref{e_example1}, one has  $\beta_{r,j} = 1$ for $j \neq r+1$ (mod $d$), and $\beta_{r,j} = 2$ for $j = r+1$ (mod $d$). 
\end{remark}

\vspace{0.3cm}
\noindent \textbf{Acknowledgements.} This research was supported by the ARC Centre of Excellence for Mathematics and Statistics of Complex Systems (MASCOS). The author is grateful for numerous fruitful discussions with K. Borovkov, whose suggestions helped in many ways to improve the paper.

\end{subsection}
 
% Bibliography  ---------------------------------------------------------------------------------------------------------------------------

\end{document}